\newcommand\MTkillspecial[1]{
  \bgroup
  \catcode`\&=9
  \let\\\relax%
  \scantokens{#1}%
  \egroup
}
\newcommand\DeclarePairedDelimiterMultiline[3]{
  \DeclarePairedDelimiter{#1}{#2}{#3}
  \reDeclarePairedDelimiterInnerWrapper{#1}{star}{
  \mathopen{##1\vphantom{\MTkillspecial{##2}}\kern-\nulldelimiterspace\right.}
  ##2
  \mathclose{\left.\kern-\nulldelimiterspace\vphantom{\MTkillspecial{##2}}##3}}
}
\title{Random walk speed is a proper function on Teichmüller space}
\author{Aitor Azemar\thanks{ Aitor.Azemar@glasgow.ac.uk, University of Glasgow, UK}, Vaibhav Gadre\thanks{Vaibhav.Gadre@glasgow.ac.uk,University of Glasgow, UK}, S\'ebastien Gou\"ezel\thanks{sebastien.gouezel@univ-rennes1.fr, IRMAR, CNRS UMR 6625, Université de Rennes 1, 35042 Rennes, France},\\Thomas Haettel\thanks{thomas.haettel@umontpellier.fr, IMAG, Univ Montpellier, CNRS, France, and IRL 3457, CRM-CNRS, Universit\'{e} de Montr\'{e}al, Canada.}, Pablo Lessa\thanks{plessa@fing.edu.uy, IMERL, Universidad de la República, Uruguay}\,\, and Caglar Uyanik\thanks{caglar@math.wisc.edu, University of Wisconsin, Madison, USA}}
\newtheorem{maintheorem}{Theorem}
\newtheorem{theorem}{Theorem}[section]
\newtheorem{lemma}[theorem]{Lemma}
\newtheorem{corollary}[theorem]{Corollary}
\newtheorem{proposition}[theorem]{Proposition}
\newtheorem{conjecture}[theorem]{Conjecture}
\newtheorem*{conjecture_unnumbered}{Conjecture}
\newtheorem{question}[theorem]{Question}
\newtheorem{definition}[theorem]{Definition}
\DeclareMathOperator{\dist}{dist}
\newcommand{\R}{\mathbb{R}}
\newcommand{\N}{\mathbb{N}}
\newcommand{\T}{\mathcal{T}}
\newcommand{\C}{\mathbb{C}}
\newcommand{\Pbb}{\mathbb{P}}
\newcommand{\Ebb}{\mathbb{E}}
\newcommand{\dd}{\mathop{}\!\mathrm{d}}
\DeclarePairedDelimiterMultiline{\abs}{\lvert}{\rvert}
\DeclarePairedDelimiterMultiline{\pare}{(}{)}
\DeclarePairedDelimiterMultiline{\norm}{\lVert}{\rVert}
\newcommand{\st}{\,:\,}
\renewcommand{\H}{\mathbb{H}}
\renewcommand{\P}{\mathbb{P}}
\renewcommand{\epsilon}{\varepsilon}
\DeclareMathOperator{\isom}{Isom}
\DeclareMathOperator{\Sl}{SL}
\DeclareMathOperator{\supp}{supp}
\DeclareMathOperator{\length}{length}
\DeclareMathOperator{\ext}{Ext}
\numberwithin{equation}{section}
\begin{document}
 \maketitle

\begin{abstract}
Consider a closed surface $M$ with negative Euler characteristic, and an admissible probability measure on the fundamental group of $M$ with finite first moment.  Corresponding to each point in the Teichmüller space of $M$, there is an associated random walk on the hyperbolic plane. We show that the speed of this random walk is a proper function on the Teichmüller space of $M$, and we relate the growth of the speed to the Teichmüller distance to a basepoint. 
One key argument is an adaptation of Gou\"ezel's pivoting techniques to actions of a fixed group on a sequence of hyperbolic metric spaces. 
\end{abstract}

\section{Introduction and statements of results}

Let $M$ be a compact oriented surface with negative Euler characteristic with a basepoint \(p\), 
and let $\Gamma=\pi_1(M,p)$. Let $\mu$ be a probability measure on $\Gamma$ that is admissible, i.e., 
the semigroup generated by the support of $\mu$ is equal to $\Gamma$. 
Further assume that $\mu$ has finite first moment. Consider a random walk $Z_n=g_1\dotsm g_n$ 
where $g_i$ are i.i.d.\ elements of $\Gamma$ with distribution $\mu$. Fixing a complete hyperbolic metric $\rho$ on $M$, define 
\[
\ell(\rho) \coloneqq \lim_{n\to\infty}\frac{\abs{Z_n}_{\rho}}{n}
\]
where $\abs{Z_n}_{\rho}$ denotes the $\rho$-length of the unique hyperbolic geodesic representing the free homotopy class of the element $Z_n$. The limit above exists almost surely, and is well defined. The quantity $\ell(\rho)$ is called the \emph{speed (or drift)} of the random walk on $(M, \rho)$. 

Let $\T(M)$ be the Teichmüller space of marked complete hyperbolic metrics on $M$. 
Our first result is concerned with the quantitative behavior of $\ell(\rho)$ as one varies the hyperbolic metric $\rho$ on $M$, in other words as $[\rho]$ varies over points in the Teichmüller space $\T(M)$. 
When one moves in Teichmüller space, some curves get longer but others get shorter, so the 
behavior is not obvious. However, one expects that most curves get longer, so one should expect 
\(\ell(\rho)\) to tend to infinity at \(\rho\) diverges to infinity in Teichmüller space. There is a 
difficulty, though, that most curves become more and more parallel to each other (up to orientation) when \(\rho\) converges to 
a point at infinity, as they align asymptotically with the measured foliation at infinity. This means that 
consecutive steps of the random walk are likely to be both large, but in opposite directions, 
thereby cancelling each other effectively and not contributing to the speed. 
Our main theorem shows that the former effect dominates the latter: The speed indeed tends to 
infinity at infinity. However, this discussion hints at the fact that this is not straightforward, 
and indeed our proof is rather indirect.

\begin{maintheorem}[Speed is a proper function on Teichmüller space]\label{maintheorem}
Assume that the probability measure $\mu$ on $\Gamma$ is admissible and has a
finite first moment. Then the function \(\ell:\T(M) \to [0,+\infty)\) is
proper.
\end{maintheorem}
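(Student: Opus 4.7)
The plan is to argue by contradiction. Suppose that $\ell$ is not proper: then there is a sequence $(\rho_n)$ in $\T(M)$ that eventually leaves every compact subset of $\T(M)$, but for which $\ell(\rho_n)$ stays bounded. I want to show this is impossible by producing a rescaled limit action of $\Gamma$ on an $\R$-tree and reading off a positive drift on the tree.

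First, after passing to a subsequence, I would invoke the Bestvina--Paulin construction. Since $[\rho_n]$ exits every compact set in $\T(M)$, one can choose rescaling factors $L_n \to \infty$ so that the $\Gamma$-actions on the universal covers $(\widetilde{M}, \widetilde{\rho}_n / L_n)$ converge in the equivariant Gromov--Hausdorff sense to an isometric action of $\Gamma$ on an $\R$-tree $T$. For a surface group this limit is minimal, small and non-trivial, and can be identified with the dual tree of the measured foliation representing the limit of $[\rho_n]$ in the Thurston boundary; in particular the action has loxodromic elements and no global fixed point, so it is non-elementary on the $0$-hyperbolic space $T$.

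Next, because $\mu$ is admissible with finite first moment and acts non-elementarily on the hyperbolic space $T$, the induced random walk on $T$ has strictly positive drift $\ell_\infty > 0$ by the standard positive-drift theorem for non-elementary actions on hyperbolic spaces. The heart of the argument is then to show that the drifts on the approximating spaces pass to the limit, i.e.
\[
\frac{\ell(\rho_n)}{L_n} \longrightarrow \ell_\infty > 0.
\]
Once this convergence is established, one immediately obtains $\ell(\rho_n) \geq (\ell_\infty/2)\,L_n \to \infty$, contradicting the assumed boundedness.

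The main obstacle is precisely this convergence of drifts. The underlying metric spaces $(\widetilde{M}, \widetilde{\rho}_n / L_n)$ change with $n$ and may degenerate (the hyperbolicity constants can blow up, the geometry can collapse along subsurfaces, and large steps of the walk may nearly cancel as curves align with the limiting foliation), so the classical arguments giving positive drift on a fixed hyperbolic space do not transfer directly. What is required is a version of Gou\"ezel's pivoting technique that applies \emph{uniformly} to a sequence of hyperbolic metric spaces carrying the same $\Gamma$-action, yielding uniform deviation and exponential moment estimates along the sequence. This uniform pivoting is exactly the key argument advertised in the abstract, and once it is in hand, standard arguments identify the limit of $\ell(\rho_n)/L_n$ with $\ell_\infty$ and close the proof.
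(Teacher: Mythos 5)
Your proposal follows essentially the same route as the paper: rescale by a divergence factor, pass to a Bestvina--Paulin limit action on an $\R$-tree, verify the limit action is non-elementary, and conclude via a uniform-in-$n$ pivoting argument that the rescaled drifts are bounded below by the (positive) drift on the tree. The only nuance is that you claim full convergence $\ell(\rho_n)/L_n \to \ell_\infty$, whereas the paper establishes (and only needs) the lower semicontinuity $\liminf \ell(\rho_n)/L_n \ge \ell_\infty$; this does not affect the argument, since only the lower bound is used to reach the contradiction.
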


Recall that a function is proper if the preimage of any compact set is
compact.  In our particular case, since \(\ell\) is non-negative, properness is
equivalent to the statement that for each \(C > 0\) there exists a compact
set \(K \subset \T(M)\) such that \(\ell(\rho) > C\) for all \([\rho] \notin
K\).

We can actually be more precise, by comparing the speed of a random walk to the Teichmüller distance in the Teichmüller space. Let $(\T(M), \dist_{Teich})$ denote the Teichm\"uller space of $M$ endowed with the Teichm\"uller metric (see \Cref{preliminaries} for the definitions). Then, we have: 

\begin{maintheorem}\label{teichdistancebound}
 For each \([\rho_0] \in \T(M)\) there exists a constant \(c > 0\) such that
 \[\ell(\rho) \ge c\dist_{Teich}([\rho],[\rho_0])\]
 for all \([\rho] \in \T(M)\).
\end{maintheorem}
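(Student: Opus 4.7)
The plan is to quantify the pivoting argument behind Theorem A. Note that Wolpert's inequality alone gives only $\ell(\rho) \geq e^{-2d} \ell(\rho_0)$, with $d = \dist_{Teich}([\rho],[\rho_0])$, which decays to zero and is useless; the linear bound must exploit that some elements of $\Gamma$ are genuinely stretched by $\rho$ and that the random walk encounters them at a positive rate.

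The first step is to identify, for each $\rho$, a measured lamination $\lambda_\rho$ on $M$ relative to which $\rho$ is stretched with respect to $\rho_0$; a natural choice is the limit in Thurston's compactification of the Teichmüller geodesic from $[\rho_0]$ to $[\rho]$, for which $\ext_\rho(\lambda_\rho) = e^{-2d}\ext_{\rho_0}(\lambda_\rho)$. Using the collar lemma when $\lambda_\rho$ has a pinched simple closed curve component (the collar has width $\sim d$), and the extremal length inequality $\ext_\rho(g)\cdot\ext_\rho(\lambda_\rho) \geq i(g,\lambda_\rho)^2$ in the filling case, one deduces that any $g \in \Gamma$ with $i(g,\lambda_\rho) \geq c_0 > 0$ satisfies $|g|_\rho \gtrsim d$.

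The main step is then to adapt Gou\"ezel's pivoting technique to the family $\{\H^2_\rho\}$ of hyperbolic spaces, producing, with positive probability, a positive density of pivotal times in the random walk trajectory, at each of which the walk makes $\rho$-progress of at least a fixed fraction of the translation length of a chosen pivot element in $\supp(\mu^{*n_0})$ for some fixed $n_0$. By compactness of the space $PML(M)$ of projective measured laminations together with the admissibility of $\mu$, one picks a finite collection $g_1,\dots,g_k \in \supp(\mu^{*n_0})$ such that, for every $[\lambda] \in PML(M)$, at least one $g_i$ satisfies $i(g_i,\lambda) \geq c_0 > 0$. Applied to $[\lambda_\rho]$, this produces a pivot with translation length $\gtrsim d$ in $\H^2_\rho$, and combining with the density of pivots yields $\ell(\rho) \geq c\, d$.

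The main obstacle, and the technical heart of the paper, is the uniform control of the pivoting estimates as the underlying metric $\rho$ varies: the Schottky constants and the density of pivotal times must remain bounded below as $\rho$ enters the thin part of moduli space, where short curves and their collars genuinely distort the hyperbolic geometry. This uniform pivoting is exactly the content of the adaptation referred to in the abstract as ``Gou\"ezel's pivoting techniques to actions of a fixed group on a sequence of hyperbolic metric spaces''.
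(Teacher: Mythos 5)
Your first step is close in spirit to what the paper actually does, but it contains an error: the Teichmüller geodesic from \([\rho_0]\) to \([\rho]\) need not converge in Thurston's compactification (Lenzhen's examples), so the lamination \(\lambda_\rho\) you propose may not exist, and the identity \(\ext_{R(q;t)}(\cdot)=e^{-2t}\ext_{\rho_0}(\cdot)\) is a property of the \emph{vertical foliation} \(V(q)\) of the defining quadratic differential, not of a Thurston limit. The repair is exactly the paper's route: work with \(V(q)\) directly, use Minsky/Walsh's inequality \(e^{-2t}\ext_{R(q;t)}(\gamma)\ge E_q^2(\gamma)\), get a uniform lower bound on \(\max_{\gamma\in F}E_q(\gamma)\) over unit-area \(q\) from compactness of \(T^1([\rho_0])\) and the fact that \(F\) is filling (Lemmas~\ref{le:uniformlowerboundintersection} and~\ref{le:globalboundlengths}), and conclude via Maskit that \(\max_{\gamma\in F}\length_\rho(\gamma)\ge c_2\dist_{Teich}([\rho_0],[\rho])\). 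Up to this substitution, your geometric input matches the paper's.

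The genuine gap is in your main step. You assert that the pivoting argument can be run directly on \((\H,\dist)\) for each \(\rho\), with ``Schottky constants \dots bounded below'' uniformly as \([\rho]\) degenerates. This is precisely what fails, and it is the difficulty the introduction flags: as \([\rho]\to\infty\) the candidate Schottky elements have displacements \(D\sim d\), but their pairwise Gromov products \((go,ho)_o\) blow up at the \emph{same} rate (the long curves become asymptotically parallel), so the constant \(C\) in the Schottky condition is not uniformly bounded and the net progress per pivotal time, which is of order \(D-O(C)\), is not controlled. Moreover a single element \(g_i\) with \(i(g_i,\lambda)\ge c_0\) does not yield a pivot: one needs a whole Schottky family with mutual Gromov products small relative to the displacements, and the intersection-number argument gives no such control. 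The paper's resolution is a normalization absent from your sketch: rescale the metric by the minimax displacement \(R_\rho\) over a finite generating set, so the rescaled spaces \((\H,R_\rho^{-1}\dist)\) are uniformly (indeed \(\delta/R_\rho\)-) hyperbolic and, along any sequence leaving every compact set, subconverge to a non-elementary action on an \(\R\)-tree (Lemma~\ref{rtreelimit}). Uniform Schottky sets are then produced in the limiting action and transferred back by convergence (Lemmas~\ref{lem:Schottky_infty} and~\ref{schottkycriteria}), and a compactness/contradiction argument (Theorem~\ref{scalingconstanttheorem}) gives \(\ell(\rho)\ge cR_\rho\); combined with \(R_\rho\ge\max_{\gamma\in F}\length_\rho(\gamma)\ge c_2 d\) this proves the theorem. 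Without the rescaling and the limiting tree actions, the uniformity you invoke is unsubstantiated, and the direct estimate you want is exactly the statement whose proof requires them.
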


Our strategy to prove the properness of the drift function in Theorem~\ref{maintheorem} and 
the quantitative bound in Theorem~\ref{teichdistancebound} is through a
compactification argument: we get representations at boundary points, and argue that the drift there is nonzero.
Then, by continuity (up to a natural rescaling), we deduce that the drift \emph{inside} the Teichmüller space is positive.
Moreover, using extremal length bounds we prove that the natural rescaling factor grows at least linearly along all Teichmüller rays.

Note that the speed can also be considered as a Lyapunov exponent. More precisely, if $[\rho] \in \T(M)$ is a point 
in Teichmüller space, we can consider $[\rho]$ as a conjugacy class of a discrete, faithful 
representation $\rho:\pi_1(M) \rightarrow \operatorname{PSL}(2,\R)$. 
Indeed, if we fix a matrix norm $\norm{\cdot}$ on $\operatorname{PSL}(2,\R)$, we have
\[\ell(\rho) = \lim\limits_{n \to +\infty} \frac{1}{n} \int \log \norm{\rho(g_1) \dotsm \rho(g_n)} \dd\mu(g_1) \dotsb \dd\mu(g_n),\]
which is the Lyapunov exponent of the random walk on $\operatorname{PSL}(2,\R)$.

A related argument was used in~\cite{dujardin-favre} to study continuity of Lyapunov exponents for certain meromorphic families of representations in \(\Sl(2,\C)\), with the same idea which consists in looking at the scaled limiting action on an $\R$-tree. One simple situation where both approaches can be used is the case where the hyperbolic structure degenerates by only pinching a simple closed curve: this degeneracy can also be described by a meromorphic family of representations. 
However, in the context of representations of surface groups into $\operatorname{PSL}(2,\R)$, the setting of our continuity result below is more general.

The main feature of this argument is that the representations at boundary points do not live on the same space as the original representations: the group acts on
an $\R$-tree instead of the hyperbolic disk. These representations are constructed in~\cite{b1988} and~\cite{p1988}
(following previous work~\cite{cm1987} and~\cite{ms1984}). In particular, the topological type of Gromov boundaries changes in the  limit. This means that the usual continuity argument for the drift, relying on the convergence of
stationary measures on the boundary (see~\cite{erschler_kaim}), does not work.
However, we are able to obtain
the continuity of the drift in this context thanks to the pivotal times argument
of~\cite{g2021}. The notions in the next Theorem are defined in \Cref{sec:converging}.


\begin{maintheorem}[Speed is lower semi-continuous for a converging sequence of actions on uniformly hyperbolic spaces]
\label{thm:converging_actions} Let \(\Gamma\) be a countable group, and \(\mu_k\) be
a sequence of probability measures on \(\Gamma\) converging pointwise to a
probability measure \(\mu_\infty\). Let $(X_k, o_k)_{k\in \N\cup \{\infty\}}$ be
a uniformly hyperbolic sequence of pointed metric spaces, and let $\rho_k:\Gamma
\to \isom(X_k), k = 1,2,\dotsc$ be a sequence of isometric actions of \(\Gamma\)
converging to an action $\rho_\infty$. Assume that $\mu_\infty$ is
non-elementary for $\rho_\infty$.

Consider for each $k$ a random walk \(Z_n^{(k)} = g_1^{(k)}\dotsm g_n^{(k)}\) where \(g_1^{(k)}, \dotsc, g_n^{(k)},\dotsc\) is an i.i.d.\ sequence with common distribution \(\mu_k\). Then, one has
\[\liminf \ell(\mu_k) \geq \ell(\mu_\infty),\]
where for each \(k \in \N \cup \lbrace \infty\rbrace\) we define \(\ell(\mu_k) \in [0,\infty]\) as the almost sure limit
\[\ell(\mu_k) = \lim\limits_{n \to +\infty}\frac{1}{n}\dist(o_k,\rho_k(Z_n^{(k)})o_k).\]
\end{maintheorem}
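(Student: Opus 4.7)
The plan is to apply the pivoting technique of Gou\"ezel, in the version adapted to a sequence of actions on uniformly hyperbolic spaces developed earlier in the paper (as announced in the abstract).

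First, I would extract Schottky data from the limiting datum $(\rho_\infty, \mu_\infty)$. Since $\mu_\infty$ is non-elementary for $\rho_\infty$, a standard ping-pong construction produces, for any sufficiently large integer $L$, a finite Schottky family $S_L \subset \Gamma$ of elements in $\supp(\mu_\infty^{*L})$ whose images under $\rho_\infty$ are loxodromic isometries of $X_\infty$ with pairwise transverse quasi-axes, forming a Schottky configuration relative to $o_\infty$ in the sense required by the pivoting construction. By choosing $S_L$ from trajectories typical for $\mu_\infty^{*L}$ (via a Maher--Tiozzo-style loxodromy-of-typical-elements argument), one can also arrange that the average translation length of elements of $S_L$, weighted by $\mu_\infty^{*L}$, is close to $L\,\ell(\mu_\infty)$.

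Next, I would transfer the Schottky data to the actions $\rho_k$. Since $\mu_k \to \mu_\infty$ pointwise, the masses $\mu_k^{*L}(\{s\})$ for $s \in S_L$ converge to $\mu_\infty^{*L}(\{s\})$, so they are uniformly bounded below for $k \geq k_0(L)$. The convergence $\rho_k \to \rho_\infty$, together with uniform hyperbolicity of the sequence $(X_k)$, implies convergence of the finitely many distances and Gromov products that witness the Schottky configuration, so $S_L$ remains a Schottky family for $\rho_k$ on $(X_k, o_k)$ with uniform constants for $k \geq k_0(L)$.

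With this uniform Schottky data in hand, I would run the sequence-of-spaces pivoting from earlier in the paper on the walks $Z_n^{(k)}$, viewed block-wise in blocks of length $L$. The construction yields a sequence of pivotal blocks with positive density and a controlled lower bound on the displacement accumulated along pivotal blocks, giving an inequality of the form $\ell(\mu_k) \geq \alpha(L)$ valid for all $k \geq k_0(L)$, where $\alpha(L)$ depends only on the limiting Schottky data. Letting $k \to \infty$ and then $L \to \infty$, with $S_L$ constructed so that $\alpha(L) \to \ell(\mu_\infty)$, yields the desired bound $\liminf \ell(\mu_k) \geq \ell(\mu_\infty)$.

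The hard part will be the third step: making the pivoting argument depend only on finitely many observables of $(\rho_k, \mu_k)$ that converge to their $\infty$ counterparts, while still producing a lower bound sharp enough to recover $\ell(\mu_\infty)$ in the limit. Classical pivoting on a single action is robust but quantitatively crude, typically yielding some positive-but-suboptimal drift; here one must allow the Schottky family to grow with $L$ and arrange the pivoting bookkeeping so that displacement estimates in each $X_k$ are controlled uniformly, which is precisely the adaptation announced in the abstract and presumably the technical heart of the proof.
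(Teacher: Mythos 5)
Your overall architecture --- extract Schottky data from $(\rho_\infty,\mu_\infty)$ by ping-pong, transfer it to $\rho_k$ using the fact that the Schottky property is certified by finitely many distances and Gromov products that converge, then run pivoting uniformly in $k$ --- is exactly the paper's. The first two steps correspond to Lemma~\ref{lem:Schottky_infty} and Lemma~\ref{schottkycriteria}. But the mechanism you propose for recovering the \emph{sharp} constant $\ell(\mu_\infty)$ (rather than merely some positive lower bound) is different from the paper's, and it is the one step you cannot wave through. You want the Schottky family $S_L\subset\supp(\mu_\infty^{*L})$ itself to carry the drift, by arranging that its elements have translation length close to $L\,\ell(\mu_\infty)$ and that their $\mu_\infty^{*L}$-weighted average is close to $L\,\ell(\mu_\infty)$. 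In the pivoting scheme, however, the Schottky set enters the walk only through a decomposition $\mu^{2N}=\alpha\mu_S^2+(1-\alpha)\nu$ with some fixed small mass $\alpha$, and the displacement contributed by the Schottky insertions is proportional to $\alpha$. To make your $\alpha(L)$ tend to $\ell(\mu_\infty)$ you would need $S_L$ to exhaust almost all of the mass of $\mu_\infty^{*L}$ while still satisfying the uniform separation conditions of a Schottky set --- a much stronger statement than what the ping-pong constructions (e.g.\ \cite[Proposition~3.12]{g2021}) provide, and one you do not justify. As written, growing $S_L$ ``from typical trajectories'' gives neither the proportion conditions in the definition of a Schottky set nor $\alpha\to1$.

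The paper avoids this entirely: the Schottky set stays small and serves only to put consecutive blocks in general position. The drift is instead carried by the jumps of $\mu^{NA}$ over long blocks of length $NA$, which are stochastically minorized by a finitely supported random variable $Q$ built from a truncation of $\mu_\infty^{NA}$; subadditivity gives $NA\,\ell(\mu_\infty)\le\sum_g\mu_\infty^{NA}(g)\dist(o_\infty,go_\infty)$, so $\Ebb(Q)/NA$ can be made arbitrarily close to $\ell(\mu_\infty)$, and this is what feeds the rate $r$ in Lemma~\ref{lem:quantitative_bound}. Since $Q$ has finite support, its domination of the $\mu_k^{NA}$-jumps transfers to large $k$ by pointwise convergence, yielding the uniform large deviation estimate of Proposition~\ref{prop:large_deviations} and hence the theorem. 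A second ingredient you omit is the non-backtracking control on the complementary measure $\nu=(\mu^{2N}-\alpha\mu_S^2)/(1-\alpha)$ (Lemma~\ref{lem:dist_increase}): without it, the non-Schottky portions of the walk could undo the progress made at pivotal times, so ``a controlled lower bound on the displacement accumulated along pivotal blocks'' does not by itself bound the total displacement from below. If you replace your third step by the truncation-plus-subadditivity argument and add the non-backtracking lemma, your outline becomes the paper's proof.
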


Returning to the setting of Theorem~\ref{maintheorem}, it is well known that \(\ell(\rho) > 0\) for all \(\rho\) and the random walk driven by $\mu$ converges to the boundary almost surely. In order words, the limit
\[X_{\infty} = \lim\limits_{n \to \infty}\rho(Z_n)o,\]
exists almost surely and $X_{\infty}$ is in the visual boundary \(\partial \H\) of \(\H\), (cf.~\cite{Kai}). This limit defines a \emph{hitting measure} on $\partial\H$ as follows: for any Borel set $U\subset\partial\H$, 
\[
\nu_{\rho}(U) \coloneqq \mathbb{P}(\lim\limits_{n \to \infty}\rho(Z_n)o \in U).
\]

The measure $\nu_{\rho}$ is the unique \(\mu\)-stationary measure on the visual boundary for the $\rho$-action. 
Properties of the stationary measures associated to random walks with finite support are quite subtle, 
as illustrated by the following. Here $\dim(\nu_\rho) \in [0,1]$ denotes the Hausdorff dimension of the 
stationary measure $\nu_\rho$ associated to the representation $\rho$.

\begin{conjecture_unnumbered}[Singularity Conjecture]\label{singularityconjecture}
If \(\mu\) is admissible and has finite support, then there exists \(\kappa < 1\) such that
\begin{equation*}
  \dim(\nu_\rho) \le \kappa
\end{equation*}
for all \([\rho] \in T(M)\).
\end{conjecture_unnumbered}

The conjecture above, stated in~\cite[Conjecture 1.21]{dkn2009} and more generally in~\cite{kl2011}, 
remains open in spite of some recent progress made in~\cite{tk2020}. We remark that for all \(\rho\) there exists \(\mu\) \emph{with infinite support} on \(\Gamma\) such that \(\dim(\nu_\rho) = 1\).  This follows from the Furstenberg-Lyons-Sullivan discretization of Brownian motion~\cite{ls1984}, and also from more general results of Connell and Muchnik~\cite{cm2007}.

The speed of the random walk is closely related to the Hausdorff dimension of the stationary measure. Work in~\cite{t2019} shows that:
\[\dim(\nu_\rho) = \frac{h}{\ell(\rho)},\]
where $h$ denotes the entropy of the random walk. 

Therefore Theorem~\ref{maintheorem} immediately translates into a statement about the behavior of the Hausdorff dimension of the stationary measure:

\begin{corollary}[Dimension drop of stationary measures]
For each \(\epsilon > 0\) there exists a compact \(K \subset \T(M)\)  such that \(\dim(\nu_{\rho}) < \epsilon\) for all \([\rho] \notin K\).
\end{corollary}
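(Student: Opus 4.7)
The plan is to read the corollary as a direct quantitative translation of Theorem~\ref{maintheorem} through the dimension formula of~\cite{t2019} cited just above, namely $\dim(\nu_\rho) = h/\ell(\rho)$. The crucial observation is that the asymptotic entropy $h$ appearing in this formula is a function of the driving measure $\mu$ on $\Gamma$ alone (it records how the random walk spreads on the group) and does \emph{not} depend on the representation $\rho$. Hence, as $[\rho]$ ranges over $\T(M)$, the numerator stays fixed at a constant $h$, and only the denominator $\ell(\rho)$ changes. Under our admissibility and finite first moment hypotheses (together with the standard finiteness of entropy that accompanies the setting of Tanaka's theorem) this $h$ is a finite nonnegative number, so the formula makes literal sense.

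Given $\epsilon>0$, one then simply unravels the inequality $\dim(\nu_\rho) < \epsilon$ as $\ell(\rho) > h/\epsilon$. By Theorem~\ref{maintheorem}, $\ell:\T(M)\to[0,+\infty)$ is proper, so the sublevel set
\[
K \coloneqq \ell^{-1}\bigl([0,h/\epsilon]\bigr)
\]
is a compact subset of $\T(M)$. For every $[\rho]\notin K$ we have $\ell(\rho) > h/\epsilon$, and applying the dimension formula yields $\dim(\nu_\rho) = h/\ell(\rho) < \epsilon$, which is exactly what is required. There is no substantive obstacle: the entire content of the corollary is packed into Theorem~\ref{maintheorem} and the cited identity, and the step from one to the other is a single line of bookkeeping. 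The only mild subtlety worth pointing out is that if one wanted to avoid appealing to~\cite{t2019} one could replace the dimension formula by the general Fatou-type inequality $\dim(\nu_\rho) \leq h/\ell(\rho)$, which suffices for the corollary in the stated direction; but since the formula is available and already recorded in the paper, there is no reason not to use it.
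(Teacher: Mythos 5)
Your proof is correct and is essentially the paper's own argument: the paper's proof is the one-liner ``this follows immediately from Theorem~\ref{maintheorem} and the formula $\dim(\nu_\rho)=h/\ell(\rho)$,'' and you have simply written out the bookkeeping (constancy of $h$ in $\rho$, compactness of the sublevel set $\ell^{-1}([0,h/\epsilon])$ via properness). No gaps.
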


\begin{proof}
 This follows immediately from Theorem~\ref{maintheorem} and the above formula for \(\dim(\nu_\rho)\).
\end{proof}

\setcounter{maintheorem}{0}

\section{Preliminaries}\label{preliminaries}

\subsection{Teichm\"uller Geometry}

We now recall the definitions of the Teichmüller space and Teichmüller distance.  General references on the subject are~\cite{teichbook1} and~\cite{teichbook2}.

Let $M=M_{g,n}$ be a compact surface of genus $g$ with $n$ boundary components. A \emph{complex structure} on $M$ is an atlas of charts $z_{\alpha}: U_{\alpha}\to\mathbb{C}$ where at the intersection $z_{\alpha}(U_{\alpha}\cap U_{\beta})$ the transition maps $z_{\beta}\circ z_{\alpha}^{-1}$ are biholomorphisms. 

The Teichm\"uller space $\mathcal{T}(M)$ of $M$ is defined as the equivalence classes of complex 
structures on $M$, where two complex structures $X$ and $Y$ are \emph{equivalent} 
if there is a biholomorphism $f:X\to Y$ which is isotopic to the identity. 
Equivalently, $\mathcal{T}(M)$ can be thought of as the space of equivalence 
classes of hyperbolic structures on $M$, where two hyperbolic structures $X$ and $Y$ are equivalent if there is an isometry $f:X\to Y$ that is isotopic to identity. Yet another definition is given by considering the representations of the fundamental group $\Gamma=\pi_1(M)$. Let $G=\isom^{+}(\mathbb{H})$ be the orientation preserving isometries of the upper half-plane. The Teichm\"uller space of $M$ is also the space of
discrete faithful representations from $\Gamma$ into $G$ considered up
to conjugation by elements of $G$. We denote the equivalence class of a representation $\rho: \Gamma\to G$ with $[\rho]$. We endow \(\T(M)\) with the subspace topology inherited from the space of
representations from \(\Gamma\) to \(G\).

In order to define a metric on the Teichm\"uller space, we switch back to the complex analytic definition and consider a map $f:X\to Y$ where $X$ and $Y$ are two complex structures on $M$. Let us define
\[
K_{f}(p) \coloneqq \dfrac{\abs{f_z(p)}+\abs{f_{\bar{z}}(p)}}{\abs{f_z(p)}-\abs{f_{\bar{z}}(p)}}
\]
the \emph{quasi-conformal dilatation} of $f$ at $p\in X$.  This quantity is well defined as the coordinate charts are conformal. 

The \emph{quasi-conformal dilatation} of $f$ is defined as 

\[
K_f= \underset{p \in X}{\sup} \{K_{p}(f)\}.
\]

The map $f$ is called \emph{quasiconformal} if $K_f<\infty$. One can see that $K_f\ge 1$, and $K_f=1$ if and only if $f$ is conformal. The \emph{Teichm\"uller distance} $\dist_{Teich}$ on $\mathcal{T}(M)$ is defined as follows:

\[
\dist_{Teich}(X,Y)=\frac{1}{2} \inf_{f\simeq id} \{ \log (K_f) \mid f:X\to Y\},
\]
where $f$ is any quasi-conformal map isotopic to the identity.

Let $\gamma$ be a non-trivial essential simple closed curve on $M$, and $(X, \dd z)$ be a complex structure on $M$. The \emph{extremal length} of $\gamma$ on $X$ is defined as

\[
\ext_{X}(\gamma)=\sup \frac{L^{2}_{\sigma}(\gamma)}{A(\sigma)}
\]
where the supremum is over all conformal metrics $\sigma(z)\abs{\dd z}$,  and 
\[
L_{\sigma}(\gamma)=\inf _{\gamma\sim\gamma'}\int_{\gamma'} \sigma(z)\abs{\dd z}
\text{\ ,\ } 
A(\sigma)=\int_{X}\sigma^{2}(z)\abs{\dd z}^2. 
\]

\subsection{Random walks on hyperbolic spaces}
\label{sec:converging}

In this subsection we provide the context for Theorem~\ref{thm:converging_actions}. Background on random walks on groups is available in~\cite{randombook1},~\cite{randombook2}, and~\cite{randombook3}.   A general reference on hyperbolic spaces is~\cite{hyperbolicbook}. 

Let $(X, \dist)$ be a metric space. The Gromov product between points \(x,y\) in $X$ with respect to a third point \(o\) is defined as
\[(x,y)_o = \frac{1}{2}\left(\dist(o,x)+\dist(o,y)-\dist(x,y)\right).\]

The space $X$ is called \(\delta\)-\emph{hyperbolic} (or \emph{hyperbolic} for short) for some \(\delta \ge 0\) if for all \(w,x,y,z\in X\), it holds that
\[\min((x,y)_w, (y,z)_w)  \le (x,z)_w + \delta. \] 

A sequence of pointed metric spaces \((X_k,o_k)\) is called \emph{uniformly hyperbolic} if there exists \(\delta \ge 0\) such that \(X_k\) is \(\delta\)-hyperbolic for all \(k\).

Let $X$ be a hyperbolic space, and let $\Gamma$ be a finitely generated group which acts on $X$ by isometries. Let $\mu$ be a probability measure on $\Gamma$ which is \emph{admissible}, that is, the semi group generated by the support of $\mu$ is equal to $\Gamma$. The measure $\mu$ determines a random walk on $\Gamma$ by taking 
\[
Z_n=g_1g_2\dotsm g_n
\]
where $g_i$ are i.i.d elements of $\Gamma$ with common distribution $\mu$. Fixing a base point $x\in X$ defines a sequence of points $Z_n x=g_1g_2\dotsm g_n x$ in $X$, and it is called a random walk in $X$ \emph{driven by} $\mu$. 

We say that $\mu$ has a \emph{finite first moment} if
$\sum_{g \in \Gamma} \abs{g}\mu(g) < \infty$, where $\abs{\cdot}$ is some
word metric on $\Gamma$ with respect to a finite generating set.

We now consider the case where \(X=(\H,\dist)\) is the hyperbolic plane, with the distance function induced from the hyperbolic metric $ds^2=\dfrac{dx^2+dy^2}{y^2}$. Fix a base point \(o\in \H\), and let
\(G=\isom^{+}(\H)\) be the group of orientation preserving isometries of \(\H\).  Given a
discrete faithful representation \(\rho:\Gamma \to G\)  we consider the 
\emph{speed (or linear drift)}
of the induced random walk \(\rho(Z_n)o\) on \(\H\), which is the number
\(\ell(\rho)\) (given by Kingman's subadditive ergodic theorem) such that almost
surely
\[\ell(\rho) \coloneqq \lim\limits_{n \to +\infty}\frac{1}{n}\dist(o,\rho(Z_n)o).\]

This definition is equivalent to the one given in the introduction (see e.g.\ \cite{FM}). We claim that \(\ell\) is well defined as a function on \(\T(M)\): suppose that \(\widetilde{\rho} = g^{-1}\rho g\) and notice that
\[\dist(o,\widetilde{\rho}(Z_n)o) = \dist(o,g^{-1}\rho(Z_n)go) = \dist(go,\rho(Z_n)go).\]
The claim now follows from the observation that the right-hand side of the above equation differs from \(\dist(o,\rho(Z_n)o)\) in absolute value by at most \(2\dist(o,go)\).

We now describe the set-up for \Cref{thm:converging_actions} from the introduction. 




\begin{definition}
\label{def:converging_action}
Consider a group $\Gamma$, and a sequence of pointed metric spaces $(X_k, o_k)_{k \in \N\cup \{\infty\}}$, each of them endowed
with an isometric action $\rho_k$ of $\Gamma$. We say that this sequence of actions converges if,
for each $g\in \Gamma$, the distance $\dist(o_k, \rho_k(g)o_k)$ converges to
$\dist(o_\infty, \rho_\infty(g)o_\infty)$ as $k\to \infty$.
\end{definition}

We say that a
probability measure $\mu$ on $\Gamma$ is \emph{non-elementary} for an isometric
action \(\rho\) on a hyperbolic space if there exist two elements in the semigroup generated by the
support of $\mu$ which act through $\rho$ as two independent loxodromic
isometries, i.e., their sets of fixed points at infinity are disjoint.

Let \(\Gamma\) be a countable group, \(\mu_k\) be
a sequence of probability measures on \(\Gamma\) which converges pointwise to a
probability measure \(\mu_\infty\).  Let $(X_k, o_k)_{k\in \N\cup \{\infty\}}$ be
a uniformly hyperbolic sequence of pointed metric spaces, and $\rho_k:\Gamma
\to \isom(X_k), k\in\N$ be sequence of isometric actions of \(\Gamma\)
that converges to an action $\rho_\infty$. Assume that $\mu_\infty$ is non-elementary for $\rho_\infty$.

Consider for each $k$ a random walk \(Z_n^{(k)} = g_1^{(k)}\dotsm g_n^{(k)}\) where $g_i^{(k)}$ are i.i.d.\ elements with common distribution \(\mu_k\). For each \(k \in \N \cup \lbrace \infty\rbrace\), we define \(\ell(\mu_k) \in [0,+\infty]\) as the almost sure limit
\[\ell(\mu_k) = \lim\limits_{n \to +\infty}\frac{1}{n}\dist(o_k,\rho_k(Z_n^{(k)})o_k).\]
\Cref{thm:converging_actions} states that for such a sequence of actions,  
$\liminf \ell(\mu_k) \geq \ell(\mu_\infty)$. We will deduce Theorem~\ref{thm:converging_actions} from the more precise Proposition~\ref{prop:large_deviations},
that gives uniform exponential large deviation estimates along the sequence $\mu_k$.

\section{Proofs of Theorems~\ref{maintheorem} and~\ref{teichdistancebound}}

We will now explain how we can deduce Theorems~\ref{maintheorem} and~\ref{teichdistancebound} 
from the semicontinuity statement of Theorem~\ref{thm:converging_actions}.

\subsection{Proof of Theorem~\ref{maintheorem}}
Fix a finite symmetric generating set \(F \subset \Gamma\) containing the identity.  By~\cite[Proposition 2.1]{b1988} there exists for each \(\rho\) a basepoint \(o_{\rho} \in \H\) such that
\[\max\limits_{\gamma \in F}\dist(o_{\rho},\rho(\gamma)o_\rho)  = \min\limits_{x \in \H}\max\limits_{\gamma \in F}\dist(x,\rho(\gamma)x).\]
We define the rescaling factor \(R_\rho = R_{\rho, F}\) as the common value of both sides of the equation above.
From definition, it follows that \(R_\rho\) is continuous and proper on \(\mathcal{T}(M)\).
Consider the rescaled distance \(\dist_\rho = R_\rho^{-1} \dist\) on $\mathbb{H}$. 

Recall that an \(\R\)-tree is a non-empty metric space which is \(0\)-hyperbolic and such that every pair of points is joined by a unique geodesic.   
The following is the main result of~\cite{b1988} and~\cite{p1988} 
(following previous work~\cite{cm1987} and~\cite{ms1984}).

\begin{lemma}\label{rtreelimit}
Each sequence \(\rho_n\) such that \([\rho_n]\) leaves every compact subset in \(\T(M)\), has a subsequence \(\rho_{n_k} \) with \(n_k \to +\infty\) such that \(\rho_{n_k}\) when viewed as an action on \(\H\) endowed with the distance \(\dist_{\rho_{n_k}}\) and the
basepoint \(o_{\rho_{n_k}}\), converges to an action \(\rho_T\) on an \(\R\)-tree \((T,\dist_T,o_T)\).

Furthermore, the group \(\rho_T(\Gamma)\) acts minimally on \(T\) (i.e., there is no proper closed invariant subtree), and for any arc \(I\) in \(T\) the set of \(\gamma \in \Gamma\) such that \(\rho_T\) stabilizes \(I\) is a virtually abelian subgroup of \(\Gamma\).
\end{lemma}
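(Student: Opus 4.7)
The approach is the classical Bestvina-Paulin compactness argument. Since $R_\rho$ is continuous and proper on $\T(M)$, and $[\rho_n]$ leaves every compact set, we have $R_{\rho_n} \to \infty$. Consider then the rescaled pointed $\Gamma$-spaces $(\H, \dist_{\rho_n}, o_{\rho_n})$. By the definition of $R_{\rho_n}$ together with the minimax choice of $o_{\rho_n}$, each generator $\gamma \in F$ displaces $o_{\rho_n}$ by at most $1$ in the rescaled metric, so by the triangle inequality every $g \in \Gamma$ of word length $k$ displaces $o_{\rho_n}$ by at most $k$; this controls the orbits uniformly in $n$.

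Next, I would pass to an equivariant pointed Gromov-Hausdorff limit. Since closed balls in each rescaled $(\H, \dist_{\rho_n})$ are compact and the displacement of each fixed $g \in \Gamma$ is uniformly bounded along the sequence, a diagonal extraction produces a subsequence $n_k$ and a pointed $\Gamma$-space $(T, \dist_T, o_T)$ with an isometric action $\rho_T$ such that $\dist_{\rho_{n_k}}(o_{\rho_{n_k}}, \rho_{n_k}(g) o_{\rho_{n_k}}) \to \dist_T(o_T, \rho_T(g) o_T)$ for every $g \in \Gamma$. Since $(\H, \dist)$ has a fixed hyperbolicity constant $\delta_0$, the rescaled space is $(\delta_0/R_{\rho_n})$-hyperbolic, and these constants tend to zero; as Gromov products pass to the limit under equivariant Gromov-Hausdorff convergence, $T$ is $0$-hyperbolic, complete, and geodesic, hence an $\R$-tree. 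I would then replace $T$ by its unique minimal $\rho_T(\Gamma)$-invariant subtree to obtain minimality; this subtree is non-empty because the normalization $\max_{\gamma \in F}\dist_{\rho_n}(o_{\rho_n}, \rho_n(\gamma)o_{\rho_n}) = 1$ survives in the limit and precludes a global fixed point on $T$.

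The subtle and hardest step is virtual abelianness of arc stabilizers, and it is what genuinely uses discreteness and faithfulness of the $\rho_n$. Following Bestvina's argument, suppose a subgroup $H \le \Gamma$ fixes an arc $I \subset T$ pointwise, and pick $h_1, h_2 \in H$. In the limit both act trivially on $I$, so in the rescaled metrics their displacements at points approximating $I$ are $o(1)$; in the \emph{original} unrescaled metric on $\H$ this means that $\rho_n(h_1)$ and $\rho_n(h_2)$ displace a long segment of length proportional to $R_{\rho_n}$ by amounts negligible compared to the Margulis constant $\epsilon$ of $\psl(2,\R)$. The Margulis lemma then forces $\langle \rho_n(h_1), \rho_n(h_2)\rangle$ to be virtually abelian for all sufficiently large $n$, and by faithfulness the same holds for $\langle h_1, h_2\rangle \le \Gamma$. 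Applying this to finitely generated subgroups of $H$, together with the uniform bound on the index of abelian subgroups in virtually abelian subgroups of a surface group, concludes that $H$ is virtually abelian.

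The main obstacle I anticipate is the last step: making the quantitative interplay between the Margulis constant, the rescaling factor $R_{\rho_n}$, and the chosen approximating points in $I$ precise enough to apply Margulis uniformly along the subsequence, and then promoting a virtually abelian conclusion for pairs to one for the whole stabilizer. The earlier steps, by contrast, are a relatively routine exercise in equivariant Gromov-Hausdorff convergence once the normalization $R_{\rho_n}$ is in place.
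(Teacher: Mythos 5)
The paper offers no proof of this lemma at all: it is quoted verbatim as the main result of Bestvina~\cite{b1988} and Paulin~\cite{p1988}, so you are reconstructing an argument that the authors deliberately treat as a black box. Your outline follows the standard Bestvina--Paulin route, and the first two steps are essentially right: $R_{\rho_n}\to\infty$ by properness, the minimax basepoint gives uniform displacement bounds for each fixed $g\in\Gamma$, a diagonal extraction yields an equivariant pointed limit, the limit is $0$-hyperbolic because the rescaled spaces are $(\delta_0/R_{\rho_n})$-hyperbolic, and the surviving minimax normalization (every point of the limit has some $\gamma\in F$ displacing it by at least $1$) rules out a global fixed point.

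The arc-stabilizer step, however, has a genuine gap precisely where you sense the difficulty, and it is a quantitative conflation rather than a missing detail. If \(h_1,h_2\) fix the arc \(I\) in the limit, then at approximating points their displacements are \(o(1)\) in the rescaled metric \(\dist_{\rho_n}=R_{\rho_n}^{-1}\dist\), which means \(o(R_{\rho_n})\) in the \emph{unrescaled} hyperbolic metric --- not ``negligible compared to the Margulis constant.'' A displacement of order \(\sqrt{R_{\rho_n}}\) is \(o(R_{\rho_n})\) yet tends to infinity, so the Margulis lemma cannot be applied to \(\rho_n(h_1)\) and \(\rho_n(h_2)\) directly. The standard repair, and what~\cite{b1988} and~\cite{p1988} actually do, is a commutator argument: an isometry of \(\H\) that moves both endpoints of a geodesic segment of length \(\asymp R_{\rho_n}\) by \(o(R_{\rho_n})\) is an approximate translation along that segment, and thinness of hyperbolic triangles then forces the commutator \([\rho_n(h_1),\rho_n(h_2)]\) (and suitable iterated commutators) to displace the midpoint of the segment by an amount tending to \(0\). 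Only at that stage do discreteness and the Margulis/Zassenhaus lemma apply, showing the relevant subgroup of the discrete group \(\rho_n(\Gamma)\) is elementary, hence virtually abelian; faithfulness then transports the conclusion to \(\langle h_1,h_2\rangle\), and the passage from pairs to the full stabilizer goes through as you indicate. So the skeleton is correct, but as written the key application of the Margulis lemma does not go through; either fix it via the commutator trick or, as the paper does, simply cite~\cite{b1988} and~\cite{p1988}.
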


We now verify that the action \(\rho_T\) is non-elementary.

\begin{lemma}[Non-elementary action on the \(\R\)-tree]\label{nonelementarytreelemma}
Let \(\rho_T\) be a representation of \(\Gamma\) into the isometry group of an \(\R\)-tree \((T,\dist_T)\) with the property that stabilizers of arcs are virtually abelian.

Then there exist \(\gamma_1,\gamma_2 \in \Gamma\) such that \(\rho_T(\gamma_1)\) and \(\rho_T(\gamma_2)\) are loxodromic isometries of \(T\) along geodesics whose intersection is either empty or a compact arc.
\end{lemma}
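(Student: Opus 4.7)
The plan is to proceed by contradiction, ruling out each elementary configuration that would prevent the existence of two loxodromic elements with axes meeting in at most a compact arc. The common thread is that each failure produces a finitely generated subgroup of $\Gamma$ that is simultaneously virtually abelian (by the arc-stabilizer hypothesis) and nonabelian free (since the surface group $\Gamma$ of negative Euler characteristic contains $F_2$). Throughout, I use that the action is minimal (as in \Cref{rtreelimit}) and that in the rescaled limit $\max_{\gamma\in F}\dist_T(o_T,\rho_T(\gamma)o_T)=1$, which together imply that $T$ is not a single point.

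First I would show $\rho_T(\Gamma)$ contains a loxodromic element. Otherwise every element is elliptic; by the classical result for finitely generated groups of elliptic isometries on $\R$-trees, $\Gamma$ then fixes either a point, which combined with minimality forces $T=\{\cdot\}$ (a contradiction), or else an end $\xi\in\partial T$. In the latter case, every element pointwise fixes some ray to $\xi$ (since an isometry fixing an interior fixed point and $\xi$ acts trivially on the geodesic between them). Given a nonabelian free subgroup $\langle a,b\rangle\le\Gamma$, the pointwise-fixed rays of $a$ and $b$ both terminate at $\xi$ and hence share an unbounded subray, so $\langle a,b\rangle$ pointwise fixes an arc and lies inside a virtually abelian arc stabilizer, contradicting that $\langle a,b\rangle\cong F_2$.

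Pick then a loxodromic $\gamma_1$ with axis $A_1$. I would next rule out the case that all loxodromic axes share a common end of $T$ (this includes the subcase where they all coincide with $A_1$). A direct analysis of three axes that pairwise meet in rays shows they must have a single end $\xi\in\partial T$ in common, and then a conjugation argument (passing to an index-two subgroup if necessary) gives that $\Gamma$ fixes $\xi$. The Busemann cocycle then defines a homomorphism $\beta:\Gamma\to\R$ with nontrivial (hence abelian) image, and with kernel consisting of elliptic elements each pointwise fixing a ray to $\xi$. Using that the abelianization of a closed surface group has rank $\ge 2$, one finds a nonabelian free subgroup $F_2'\le\ker\beta$ (for instance $\langle a',\, b'a'(b')^{-1}\rangle$ for appropriate $a'\in\ker\beta$ and $b'\notin\ker\beta$ sitting inside a free subgroup of $\Gamma$). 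As before, the pointwise-fixed rays of the generators of $F_2'$ share an unbounded subray, placing $F_2'$ inside a virtually abelian arc stabilizer — again a contradiction.

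Having excluded both configurations, there exist loxodromic $\gamma_1,\gamma_2\in\Gamma$ whose axes $A_1,A_2$ share no common end of $T$. Thus $A_1\cap A_2$, a closed convex subset of the line $A_1$, contains no end of $A_1$ and is therefore bounded — so empty, a single point, or a compact arc — completing the proof. The main obstacle is the fixed-end step: one has to chain together carefully (a) that two elliptic isometries with elliptic product have a common fixed point and hence pointwise fix a common ray to the shared end, (b) the arc-stabilizer hypothesis applied to these rays, and (c) the production of a nonabelian free subgroup inside $\ker\beta$ from the positive-rank abelianization of the surface group; any of these ingredients is standard in isolation, but combining them uniformly requires some care.
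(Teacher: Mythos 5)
Your argument is correct in outline, but it takes a genuinely different route from the paper: the paper's proof is essentially a two-step citation, invoking Paulin's result that these limiting actions have no global fixed point at infinity (which is proved there using exactly the ingredients you use: virtually abelian arc stabilizers plus the structure of the surface group) and then Chiswell's proposition that an irreducible action contains two independent loxodromics. You instead reprove both cited facts from scratch via the classification of reducible actions (global fixed point / invariant line / fixed end), using Serre's lemma and the Helly property to handle the all-elliptic case, and the Busemann homomorphism plus the nonabelian commutator subgroup of \(\Gamma\) to handle a fixed end. What your approach buys is self-containedness; what it costs is that you must carry out the classification carefully, and one step as written is not correct. (Both proofs also implicitly use that the action is the minimal, nontrivial one supplied by Lemma~\ref{rtreelimit}; the lemma as literally stated would fail for, say, a trivial action, so it is fine that you import minimality and the normalization, but this should be said.)

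The step that needs repair is the assertion that ``a direct analysis of three axes that pairwise meet in rays shows they must have a single end in common.'' As a statement about three lines in a tree this is false: the ideal-triangle configuration of axes \((a,b)\), \((b,c)\), \((c,a)\) with \(a,b,c\) distinct ends is pairwise end-sharing but has no common end. To exclude it you must use the group action, not just the geometry of the three lines. The fix is short: if \(g\) has axis \((a,b)\) and \(h\) has axis \((b,c)\), then \(hgh^{-1}\) is loxodromic with axis \((ha,b)\); since a loxodromic isometry of an \(\R\)-tree fixes exactly the two ends of its axis, \(ha\notin\{a,b,c\}\), so the axis \((ha,b)\) shares no end with \((c,a)\), contradicting the standing hypothesis that every pair of loxodromic axes meets in a ray. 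With the ideal triangle excluded, your dichotomy is clean: either some two axes share exactly one end \(\xi\), in which case every other axis must contain \(\xi\) and \(\xi\) is canonically determined, hence \(\Gamma\)-fixed; or all axes coincide in a single \(\Gamma\)-invariant line, which by minimality is all of \(T\) and leads to a virtually-(abelian-by-abelian) group, again contradicting the presence of \(F_2\). With that repair, and noting that a one-point intersection should be read as a degenerate compact arc, the rest of your argument (Serre--Helly for the elliptic case, the Busemann homomorphism with \([\Gamma,\Gamma]\supseteq F_2\) in its kernel pointwise fixing a common subray toward \(\xi\)) is sound.
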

\begin{proof}
The action of \(\rho_T(\Gamma)\) is irreducible in the sense that there is no global fixed point on the boundary at infinity (see~\cite[Proposition 2.6]{p1989}).   
The existence of the two required loxodromic elements now follows from~\cite[Proposition 3.7]{chiswell}.
\end{proof}

In view of the above lemmas, Theorem~\ref{maintheorem} follows immediately from Theorem~\ref{thm:converging_actions} 
and in fact we obtain a lower bound in terms of the rescaling factor \(R_\rho\).

\begin{theorem}\label{scalingconstanttheorem}
 There exists a constant \(c > 0\) such that \(\ell([\rho]) \ge c R_{\rho}\) for all \([\rho] \in \T(M)\).
\end{theorem}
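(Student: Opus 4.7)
The plan is proof by contradiction. Suppose no such constant $c>0$ exists; then there is a sequence $[\rho_n] \in \T(M)$ with $\ell(\rho_n)/R_{\rho_n} \to 0$. I split into two cases depending on whether $\{[\rho_n]\}$ stays bounded in Teichmüller space or leaves every compact subset, and in each case derive a contradiction from Theorem~\ref{thm:converging_actions}.

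Suppose first, after passing to a subsequence, that $[\rho_n] \to [\rho_\infty]$ for some $[\rho_\infty] \in \T(M)$. Continuity of $R$ gives $R_{\rho_n} \to R_{\rho_\infty} \in (0,\infty)$, so the contradiction hypothesis forces $\ell(\rho_n) \to 0$. Choosing representatives of each conjugacy class so that the basepoints $o_{\rho_n}$ converge in $\H$ to $o_{\rho_\infty}$, the pointed hyperbolic planes $(\H, \dist, o_{\rho_n})$ equipped with the actions $\rho_n$ satisfy Definition~\ref{def:converging_action} with limit $\rho_\infty$, and the sequence is trivially uniformly hyperbolic. Since $\mu$ is admissible and $\rho_\infty$ has discrete, non-elementary Fuchsian image, $\mu_\infty = \mu$ is non-elementary for $\rho_\infty$. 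Theorem~\ref{thm:converging_actions} therefore yields $\liminf \ell(\rho_n) \ge \ell(\rho_\infty) > 0$, contradicting $\ell(\rho_n) \to 0$.

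Suppose instead that $[\rho_n]$ leaves every compact subset of $\T(M)$. Passing to a subsequence, Lemma~\ref{rtreelimit} provides a limiting action $\rho_T$ of $\Gamma$ on an $\R$-tree $(T, \dist_T, o_T)$ with virtually abelian arc stabilizers, such that $(\H, \dist_{\rho_n}, o_{\rho_n})$ with action $\rho_n$ converges to $(T, \dist_T, o_T)$ with action $\rho_T$ in the sense of Definition~\ref{def:converging_action}. The rescaled drift satisfies $R_{\rho_n}^{-1}\ell(\rho_n) \to 0$ by assumption. The sequence is uniformly hyperbolic because rescaling $\H$ by $R_{\rho_n}^{-1}$ can only shrink its hyperbolicity constant, and $T$ is $0$-hyperbolic. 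By Lemma~\ref{nonelementarytreelemma} there exist $\gamma_1, \gamma_2 \in \Gamma$ whose images under $\rho_T$ are loxodromic along axes with disjoint pairs of ideal endpoints; since $\mu$ is admissible these elements lie in the semigroup generated by $\supp \mu$, so $\mu$ is non-elementary for $\rho_T$. Moreover the minimax definition of $R_\rho$ yields $\dist(o_\rho, \rho(g)o_\rho) \le \abs{g}_F \cdot R_\rho$ for every $g\in \Gamma$, hence $\dist_T(o_T, \rho_T(g)o_T) \le \abs{g}_F$, transferring the finite first moment of $\mu$ to the tree action and guaranteeing $\ell(\rho_T) > 0$ by standard results for non-elementary random walks on hyperbolic spaces. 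Theorem~\ref{thm:converging_actions} now gives $\liminf R_{\rho_n}^{-1}\ell(\rho_n) \ge \ell(\rho_T) > 0$, the required contradiction.

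The main obstacle is verifying all hypotheses of Theorem~\ref{thm:converging_actions} in the divergent case. Uniform hyperbolicity is automatic from rescaling; pointwise convergence of the actions is supplied by Lemma~\ref{rtreelimit}; non-elementarity uses admissibility of $\mu$ together with Lemma~\ref{nonelementarytreelemma}; and the conclusion $\ell(\rho_T) > 0$ requires a finite-first-moment check on the tree action, which is precisely what the minimax definition of the normalization $R_\rho = R_{\rho, F}$ is designed to enforce via the word-length bound above.
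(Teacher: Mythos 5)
Your proof is correct and follows essentially the same route as the paper: contradiction, the dichotomy between convergence in $\T(M)$ and escape to infinity (handled via Lemma~\ref{rtreelimit} and Lemma~\ref{nonelementarytreelemma}), and an application of Theorem~\ref{thm:converging_actions} to the rescaled actions. Your extra verification that the first moment transfers to the tree is harmless but not needed, since the paper only uses that the drift of a non-elementary action is positive (possibly infinite), which holds without moment assumptions.
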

\begin{proof}
Suppose by contradiction that we may find a sequence of representations \(\rho_n\) for which
\(\ell([\rho_n]) / R_{\rho_n}\) tends to zero. Extracting a subsequence, we may assume 
that \(\rho_{n_k}\), viewed as an action on \(\H\) endowed with the distance \(\dist_{\rho_{n_k}}\), converges
to a non-elementary action, either on a tree if \(\rho_n\) escapes to infinity by Lemma~\ref{rtreelimit}, or on 
\(\H\) itself otherwise. 

Notice that \(\ell([\rho_n])/R_{\rho_n}\) is the speed of \(\rho(Z_n)o\) with respect to the distance \(\dist_\rho\).
We may apply Theorem~\ref{thm:converging_actions} to deduce that the liminf of this speed is bounded below by the speed 
in the limiting action. As the speed of a nonelementary action is always positive, we deduce that this liminf is positive,
a contradiction.
\end{proof}

\subsection{Proof of Theorem~\ref{teichdistancebound}}

In view of Theorem~\ref{scalingconstanttheorem}, to prove Theorem~\ref{teichdistancebound} 
it suffices to find a set of curves $F$ such that the rescaling factor  \(R_\rho=R_{\rho, F}\) can be bound from below by a 
multiple of \(\dist_{Teich}([\rho_0],[\rho])\).
We fix \(F \subset \Gamma\) to be a subset that is finite, symmetric and filling.

We denote by $\length_\rho(\gamma)$ the hyperbolic length of the geodesic representative of \(\gamma\), that is
\[\length_\rho(\gamma) = \min\limits_{x \in \H}\dist(x,\rho(\gamma)x).\]
We observe that
\begin{equation}
\label{eq:Rrho_ge}
  R_\rho = \min\limits_{x \in \H}\max\limits_{\gamma \in F}\dist(x,\rho(\gamma)x)
\ge \max_{\gamma\in F}\length_\rho(\gamma).    
\end{equation}
So to obtain Theorem~\ref{teichdistancebound}, we need a lower bound for the right-hand side above.

Let \(\ext_\rho(\gamma)\) denote the extremal length of the curve \(\gamma\) under the conformal structure provided by \(\rho\).
As proven by Maskit~\cite[Corollary 3]{m1985}, we have
\begin{equation}\label{eq:maskitlowerineqhyplength}
	\frac{1}{2}\length_\rho(\gamma)e^{\length_\rho(\gamma)/2}\ge \ext_\rho(\gamma),
\end{equation}
so it suffices to obtain a lower bound on extremal length. This will be obtained from a
result of Walsh~\cite[Lemma 3]{w2019}. 
While we do not need the specific details in Walsh, we include some of them for coherence.

For a unit area quadratic differential \(q\) based at 
some basepoint \([\rho_0]\), denote \(R(q;t)\) the point in Teichmüller space obtained 
after following a Teichmüller ray for time \(t>0\) in the direction provided by \(q\). 
Let \(V(q)\) (respectively \(H(q)\)) be the vertical (respectively horizontal) foliation of \(q\). 
The union of vertical saddle connections of \(V(q)\) is called its critical graph. The complement of the critical graph decomposes into finitely many components (the number bounded above in terms of the Euler characteristic) each of which is either a cylinder \(C\) or a minimal component \(V\) with every leaf dense. The transverse measure restricted to a minimal component \(V\) is a linear combination \(\sum_j m_{V, j}\) of distinct ergodic measures \(m_{V,j}\). 
Each pair \( V_j = (V, m_{V, j} )\) is said to be an indecomposable component of \(V(q)\).

Walsh proved the following inequality
\begin{equation}\label{eq:walshlowerineqextrlength}
	e^{-2t}\ext_{R(q;t)}(\gamma)\ge E^2_q(\gamma),
\end{equation}
where 
\[
E^2_q (\gamma) = \sum_{V_j} \frac{i(V_j, \gamma)^2}{i(V_j, H(q))}
\]
in which \(i(\ast, \ast)\)
denotes the geometric intersection number.

We will use the fact that \(F\) is filling to derive a uniform (over $q$) lower bound on \(\max_{\gamma \in F} E_q(\gamma)\).

\begin{lemma}\label{le:uniformlowerboundintersection}
	Given a basepoint \([\rho_0] \in \T(M)\) there is some \(c>0\) such that
	\[
		\inf_{q\in T^1([\rho_0])}\max_{\gamma\in F} E_q(\gamma) >c,
	\]
	where the infimum is taken over all unit area quadratic differentials at \([\rho_0]\).
\end{lemma}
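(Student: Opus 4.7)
The plan is to reduce the bound on $\max_{\gamma \in F} E_q(\gamma)$ to a bound on the total intersection $\max_{\gamma \in F} i(V(q),\gamma)$ via the Cauchy--Schwarz inequality, and then to conclude using compactness of $T^1([\rho_0])$ together with the filling property of $F$.

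The first step is purely algebraic. Since $V(q)$ is the sum of its indecomposable components $V_j$ (the ergodic pieces of the minimal components, together with the transverse measures on the cylinder components), one has $i(V(q),\gamma) = \sum_j i(V_j,\gamma)$, while $\sum_j i(V_j,H(q)) = i(V(q),H(q)) = 1$ because $q$ has unit area. Applying Cauchy--Schwarz with weights $a_j = i(V_j,\gamma)/\sqrt{i(V_j,H(q))}$ and $b_j = \sqrt{i(V_j,H(q))}$ gives
\[
i(V(q),\gamma)^2 = \Big(\sum_j a_j b_j\Big)^2 \le \Big(\sum_j \frac{i(V_j,\gamma)^2}{i(V_j,H(q))}\Big)\Big(\sum_j i(V_j,H(q))\Big) = E_q(\gamma)^2,
\]
so $E_q(\gamma) \ge i(V(q),\gamma)$ for every $\gamma \in F$. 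It therefore suffices to prove
\[
\inf_{q \in T^1([\rho_0])} \max_{\gamma \in F} i(V(q),\gamma) > 0.
\]

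For the second step I would invoke compactness. The unit sphere $T^1([\rho_0])$ of holomorphic quadratic differentials based at $[\rho_0]$ is a finite-dimensional sphere, hence compact, the assignment $q \mapsto V(q)$ is continuous into the space of measured foliations on $M$, and each functional $V \mapsto i(V,\gamma)$ is continuous there. Thus $q \mapsto \max_{\gamma \in F} i(V(q),\gamma)$ is continuous on a compact space and attains its infimum at some $q_0$. If that infimum were $0$ then $i(V(q_0),\gamma) = 0$ for every $\gamma \in F$, and since $F$ is filling this would force $V(q_0) = 0$ as a measured foliation, contradicting $i(V(q_0),H(q_0)) = 1$. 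Hence the infimum is strictly positive and yields the required constant $c$.

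The only delicate point is the bookkeeping in the first step: verifying that the indecomposable decomposition of $V(q)$ used to define $E_q$ exhausts the full transverse measure of $V(q)$, so that both $i(V(q),\gamma)$ and the normalization $i(V(q),H(q)) = 1$ split as the stated sums over the $V_j$ (with cylinders treated on equal footing with the ergodic pieces of minimal components). Once this is in place, everything reduces to continuity and compactness of standard objects in Teichmüller theory, so I do not expect a serious obstacle.
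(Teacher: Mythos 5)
Your proof is correct and follows essentially the same route as the paper: bound $E_q(\gamma)$ from below by the intersection number $i(V(q),\gamma)$, then combine compactness of $T^1([\rho_0])$, continuity of intersection numbers, and the filling property of $F$. The only difference is cosmetic — where you use Cauchy--Schwarz to get the clean pointwise bound $E_q(\gamma)\ge i(V(q),\gamma)$, the paper uses $i(V_j,H(q))\le 1$ to get $E_q^2(\gamma)\ge\sum_j i(V_j,\gamma)^2$ and argues by contradiction along a convergent subsequence; your version even has the small advantage that the function whose continuity is needed, $q\mapsto i(V(q),\gamma)$, is manifestly continuous, whereas the indecomposable decomposition (and hence $E_q(\gamma)$ itself) does not vary continuously in $q$.
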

\begin{proof}
For any $q \in T^1 ([\rho_0])$, we have $i(V_j, H(q)) \le i (V(q) , H(q)) = \textrm{Area}(q) = 1$.
This implies $E^2_q (\gamma) \ge \sum_{V_j} i(V_j, \gamma)^2$.

	Assume we have a sequence \(q_n\) of unit area quadratic differentials at \([\rho_0]\) 
	such that \(\max_{\gamma \in F} E_{q_n} (\gamma) \) converges to \(0\) for all $j$
 Since the space of unit area quadratic differentials at a basepoint is compact we can pass to a subsequence that converges to some \(q\). Furthermore, since geometric intersection number is continuous we have \( E_q(\gamma) =0\) for all \(\gamma\in F\).
 In particular, this implies $i(V(q), \gamma) = 0$ for all $\gamma \in F$. 
 This is impossible because $F$ is a filling set.
\end{proof}

We use the above lemma to get the following global lower bound on the maximal lengths over~\(F\).
\begin{lemma}\label{le:globalboundlengths}
	Given a basepoint \([\rho_0] \in \T(M)\) there are some \(c_1,c_2>0\) such that, for all \([\rho] \in T(M)\),
	\[
		\max_{\gamma\in F}\ext_{\rho}(\gamma)\ge c_1 e^{2\dist_{Teich}([\rho_0],[\rho])}
	\]
	and hence
	\[
		\max_{\gamma\in F}\length_{\rho}(\gamma)\ge c_2 \dist_{Teich} ([\rho_0],[\rho])
	\]
	where \(\dist_{Teich}\) denotes the Teichmüller distance.	
\end{lemma}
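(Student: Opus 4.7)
The plan is to combine Walsh's inequality~(\ref{eq:walshlowerineqextrlength}) along a Teichm\"uller geodesic with the uniform lower bound provided by Lemma~\ref{le:uniformlowerboundintersection}, and then invert Maskit's inequality~(\ref{eq:maskitlowerineqhyplength}) to pass from extremal length to hyperbolic length.

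First I would fix $[\rho] \in \T(M)$ and set $t = \dist_{Teich}([\rho_0], [\rho])$. By Teichm\"uller's existence theorem, there is a unit-area quadratic differential $q \in T^1([\rho_0])$ such that $[\rho] = R(q; t)$. Applying~(\ref{eq:walshlowerineqextrlength}) curve-by-curve to each $\gamma \in F$ and then taking the maximum yields
\[
\max_{\gamma \in F} \ext_\rho(\gamma) \;\ge\; e^{2t}\,\max_{\gamma \in F} E_q(\gamma)^2 \;\ge\; c^2 e^{2t},
\]
where $c>0$ is the constant furnished by Lemma~\ref{le:uniformlowerboundintersection}. This gives the first inequality with $c_1 = c^2$.

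For the hyperbolic length bound, observe that $f(x) = \tfrac{1}{2} x e^{x/2}$ is strictly increasing on $[0,\infty)$, so maximizing~(\ref{eq:maskitlowerineqhyplength}) over $\gamma \in F$ and writing $L^* = \max_{\gamma \in F} \length_\rho(\gamma)$ yields
\[
\tfrac{1}{2} L^* e^{L^*/2} \;\ge\; \max_{\gamma \in F} \ext_\rho(\gamma) \;\ge\; c_1 e^{2t}.
\]
Fix any $c_2 < 4$. If $L^* \le c_2 t$, then the left side is bounded above by $\tfrac{c_2 t}{2} e^{c_2 t/2}$, which grows strictly slower than $c_1 e^{2t}$ since $2 - c_2/2 > 0$. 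Consequently the displayed inequality forces $L^* \ge c_2 t$ whenever $t$ exceeds some threshold $T_0 = T_0(c_1, c_2)$.

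It remains to treat the compact range $t \in [0, T_0]$. The function $[\rho] \mapsto L^*$ is continuous, and strictly positive since $F$ is filling and so consists of essential elements, whose hyperbolic translation lengths never vanish. Thus $L^*$ attains a positive minimum $\epsilon > 0$ on the compact ball $\{[\rho] : \dist_{Teich}([\rho_0],[\rho]) \le T_0\}$, and there $L^* \ge \epsilon \ge (\epsilon/T_0)\,t$. Taking the final constant to be $\min(c_2, \epsilon/T_0)$ gives the claimed uniform inequality on all of $\T(M)$. The argument is otherwise elementary; the only mildly delicate point is inverting the transcendental relation in Maskit's inequality, but any slope strictly less than $4$ works for large $t$ and the small-$t$ regime is controlled by compactness.
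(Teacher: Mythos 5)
Your proof is correct and follows essentially the same route as the paper: pick the Teichmüller geodesic from $[\rho_0]$ to $[\rho]$, apply Walsh's inequality~\eqref{eq:walshlowerineqextrlength} together with Lemma~\ref{le:uniformlowerboundintersection} to get the extremal length bound, then invert Maskit's inequality~\eqref{eq:maskitlowerineqhyplength} asymptotically and handle the bounded range by compactness. Your write-up is in fact somewhat more explicit than the paper's (which only sketches the inversion and the compact-ball step), and your observation that any slope strictly below $4$ works asymptotically is consistent with, indeed sharper than, the paper's stated threshold.
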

\begin{proof}
	Let \(q\) be such that  \([\rho] = R(q;\dist_{Teich}([\rho_0],[\rho]))\). By \cref{eq:walshlowerineqextrlength} we have
	\[
		\max_{\gamma\in F}\ext_{\rho}(\gamma)\ge e^{2\dist([\rho_0],[\rho])}\max_{\gamma\in F} E^2_q(\gamma),
	\]
	and so by \cref{le:uniformlowerboundintersection} we get the first inequality. By \cref{eq:maskitlowerineqhyplength} we have
	\[
		\max_{\gamma\in F} \frac{1}{2} \length_{\rho}(\gamma)+\log \max_{\gamma\in F} \frac{1}{2} \length_{\rho}(\gamma) \ge 2\dist([\rho_0],[\rho])+\log( c_1),
	\]
	so the second inequality in the lemma is asymptotically satisfied for any \(c_2\) slightly smaller than 2. Furthermore, given any bounded domain we can choose \(c_2\) small enough so the inequality is satisfied.
\end{proof}

Together with~\eqref{eq:Rrho_ge}, Lemma~\ref{le:globalboundlengths} gives a lower bound \(R_\rho \geq c_2 \dist_{Teich} ([\rho_0],[\rho])\). With Theorem~\ref{scalingconstanttheorem}, this concludes the proof of Theorem~\ref{teichdistancebound}.

\section{Proof of Theorem~\ref{thm:converging_actions}}

In  this section, we prove Theorem~\ref{thm:converging_actions}. To have lighter notation, we will keep the action implicit and write
$go_k$ instead of $\rho_k(g) o_k$. Since there is only one possible action for each basepoint
$o_k$, this should not create confusion.

\subsection{Schottky sets}

Following~\cite{g2021},  a finite set of isometries \(S\) acting on a Gromov hyperbolic space $X$, is said to be \((\eta,C,D)\)-Schottky if the following three conditions are satisfied:
\begin{enumerate}
 \item For all \(x,y\in X\) the proportion of \(a \in S\) such that \((x,ay)_o \le C\) is at least \(1-\eta\).
 \item For all \(x,y\in X\) the proportion of \(a \in S\) such that \((x,a^{-1}y)_o \le C\) is at least \(1-\eta\).
 \item For all \(a \in S\) one has \(\dist(o,ao) \ge D\).
\end{enumerate}

The following is a sufficient condition for a set \(S\) to be Schottky with certain parameters, which depends on checking conditions involving only a finite number of points.
Since the notion of convergence we use in Definition~\ref{def:converging_action} only
gives controls for finitely many points at a time, this criterion will enable us
to construct finite sets which are Schottky sets uniformly along a converging family of representations.

\begin{lemma}[Schottky set criterion]\label{schottkycriteria}
Let \((X,\dist)\) be a \(\delta\)-hyperbolic metric space with a basepoint \(o \in X\).
Suppose \(S\) is a finite symmetric set of isometries of \(X\) such that \(c_1 + 2\delta < c_2/2\) where
\(c_1 = \max\limits_{g \neq  h, g,h \in S} (go,ho)_o,\) and \(c_2 = \min\limits_{g \in S}\dist(o,go)\).

Then \(S\) is an \((\eta,C,D)\)-Schottky set with \(\eta = \frac{2}{\#S}\) and \(C = c_1 + 3\delta\) and \(D = c_2\).
\end{lemma}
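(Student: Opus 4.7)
The plan is to verify the three Schottky conditions in turn. Condition 3, namely $\dist(o,ao) \geq D = c_2$, is immediate from the definition of $c_2$. Condition 2 (counting $a$ with $(x, a^{-1}y)_o > C$) reduces to condition 1 (counting $a$ with $(x, ay)_o > C$) by the substitution $a \mapsto a^{-1}$ and the symmetry of $S$. So the real content lies in condition 1: for any $x, y \in X$, at most $\eta\cdot\#S = 2$ elements $a \in S$ satisfy $(x, ay)_o > C = c_1 + 3\delta$.

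The key computational input is the identity
\[(ao, ay)_o + (a^{-1}o, y)_o = \dist(o,ao),\]
which I would verify by expanding both Gromov products and using $\dist(ao,ay) = \dist(o,y)$, $\dist(o,a^{-1}o) = \dist(o,ao)$, and $\dist(a^{-1}o,y) = \dist(o,ay)$. Combined with $\dist(o,ao) \geq c_2$, this identity encodes a controlled trade-off between $(ao,ay)_o$ and $(a^{-1}o,y)_o$.

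The main step is a partition of $S$ based on where $y$ sits relative to $a^{-1}o$: set $S_1 = \{a \in S : (a^{-1}o, y)_o > c_1 + 2\delta\}$ and $S_2 = S \setminus S_1$. First, $|S_1| \leq 1$: if $a \neq b$ were both in $S_1$, hyperbolicity applied to $a^{-1}o, y, b^{-1}o$ at $o$ would give $(a^{-1}o, b^{-1}o)_o > c_1 + \delta$, contradicting $(a^{-1}o, b^{-1}o)_o \leq c_1$ (valid because $a^{-1}, b^{-1}\in S$ by symmetry). Second, for $a \in S_2$ with $(x, ay)_o > C$, the identity together with $c_1 + 2\delta < c_2/2$ forces $(ao, ay)_o \geq c_2 - (c_1+2\delta) > c_1 + 2\delta$; then hyperbolicity applied to $x, ay, ao$ at $o$ yields $(x, ao)_o > c_1 + \delta$. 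If two distinct such $a, a' \in S_2$ both existed, a final application of hyperbolicity to $ao, x, a'o$ at $o$ would give $(ao, a'o)_o > c_1$, another contradiction. Hence at most one element of $S_2$ and at most one of $S_1$ can be ``bad'', for a total of at most two.

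I expect the main obstacle to be not conceptual but the careful bookkeeping of constants. Each application of hyperbolicity loses a $\delta$, so the thresholds cascade as $c_1 + 3\delta \to c_1 + 2\delta \to c_1 + \delta \to c_1$, and the cutoff $c_1 + 2\delta$ defining $S_1$ must be chosen precisely so that the hypothesis $c_1 + 2\delta < c_2/2$ leaves enough room on $S_2$ for $(ao, ay)_o$ to strictly exceed $c_1 + 2\delta$. Keeping all inequalities strict at each step is the delicate part.
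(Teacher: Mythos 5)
Your proof is correct and follows essentially the same route as the paper's: your identity $(ao,ay)_o+(a^{-1}o,y)_o=\dist(o,ao)$ is exactly what the paper uses in its Claim~2, your bound $\abs{S_1}\le 1$ is its Claim~1 (disjointness of the shadows $V(g)$), and your final hyperbolicity cascade matches its Claims~3--4, with only minor differences in which points the $\delta$-inequality is applied to. The constants all check out, so this is a valid proof of the lemma.
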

\begin{proof}
 Let \(\epsilon \in (2\delta, c_2/2 - c_1)\) and for each \(g \in S\) set
 \[V(g) = \lbrace x \in X:  (x,go)_o \ge c_1 + \epsilon\rbrace.\]

\emph{Claim 1: If \(g \neq h\) then \(V(g) \cap V(h) = \emptyset\).}

Indeed if \(x \in V(g) \cap V(h)\) then one would have
\[c_1+\epsilon \le \min\{ (x,go)_o
, (x,ho)_o \} \le (go,ho)_o + \delta \le c_1 + \delta,\]
contradicting the fact that \(\delta < \epsilon\).

\emph{Claim 2: If \(x \notin V(g^{-1})\) then \(gx \in V(g)\).}

To see this observe that from the first condition one has
\[\frac{\dist(o,x) + \dist(o,g^{-1}o) - \dist(x,g^{-1}o)}{2} < c_1 + \epsilon,\]
while if \(gx \notin V(g)\) we would have
\[\frac{\dist(o,gx) + \dist(o,go) - \dist(gx,go)}{2} < c_1 + \epsilon.\]

Taking the sum this would imply
\[c_2 \le \dist(o,go) < 2c_1 + 2\epsilon,\]
contradicting the fact that \(\epsilon < \frac{1}{2}c_2 - c_1\).

\emph{Claim 3:  If \(x \in V(g)\) and \(y \in V(h)\) for \(g \neq h\) then \((x,y)_o \le c_1 + 2\delta\).}

By hyperbolicity one has
\[\min((x,go)_o,(x,ho)_o) \le (go,ho)_o + \delta \le c_1 + \delta.\]

Since \((x,go)_o \ge c_1 + \epsilon > c_1 + \delta\) this implies that \((x,ho)_o \le c_1 + \delta\).  From this we obtain
\[\min((y,ho)_o,(x,y)_o) \le (x,ho)_o + \delta \le c_1 + 2\delta,\]
but since \((y,ho)_o \ge c_1 + \epsilon > c_1 + 2\delta\) this implies \((x,y)_o \le c_1 + 2\delta\) as claimed.

\emph{Claim 4:  \(S\) is \((\eta,C,D)\)-Schottky for the constants in the statement.}

Let us check the first property in the definition of Schottky sets, as the second one follows by symmetry
of $S$ and the third one comes from the definition of $c_2$.
Given \(x,y \in X\) let \(a_1,a_2 \in S\) be distinct and such that \(x \notin V(a_i^{-1})\) for \(i = 1,2\).
By Claim 1 the \(a_i\) are chosen among at least \(\# S - 1\) elements of \(S\).
By Claim 2 one has \(a_ix \in V(a_i)\) for \(i=1,2\). By hyperbolicity and Claim 3 one has
\[\min((a_1x,y)_o,(a_2x,y)_o) \le (a_1x,a_2x)_o + \delta \le c_1 + 3\delta  = C.\]

This implies that either \((a_1x,y)_o \le C\) or \((a_2x,y)_o \le C\).
Hence the subset of \(S\) consisting of elements with \((ax,y)_o > C\) can have at most two elements.
\end{proof}

\subsection{Proof of Theorem~\ref{thm:converging_actions}}

In this paragraph, we prove Theorem~\ref{thm:converging_actions}.
Let us fix a sequence of pointed $\delta$-hyperbolic spaces $(X_k, o_k)$ endowed with
actions of a group $\Gamma$, and assume that $\rho_k$ converges to $\rho_\infty$ in
the sense of Definition~\ref{def:converging_action}. Let also $(\mu_k)$ be probability
measures on $\Gamma$ such that $\mu_k$ converges pointwise to $\mu_\infty$ and
the action of $\mu_\infty$ through $\rho_\infty$ is non-elementary on $X_\infty$.

The following lemma is a classical application of a ping-pong argument.
\begin{lemma}
\label{lem:Schottky_infty}
Let $\eta>0$. Then there exists $C>0$ such that, for any $D>0$, there exist $N$ and a finite symmetric
set $S$ in $\Gamma$ in the support of $\mu_\infty^N$ such that $\# S \ge 2/\eta$ and
\begin{equation}
\label{eq:construct_Schottky}
  \max\limits_{g \neq  h, g,h \in S} (g o_\infty, h o_\infty)_{o_\infty} < C - 3\delta,
  \quad \min\limits_{g \in S}\dist(o_\infty,g o_\infty) > D.
\end{equation}
\end{lemma}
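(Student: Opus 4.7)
The plan is a classical ping-pong construction. Non-elementarity provides two elements $\alpha, \beta \in \Gamma$ in the semigroup generated by $\supp(\mu_\infty)$ whose images under $\rho_\infty$ are independent loxodromic isometries of $X_\infty$, with disjoint fixed-point pairs $\{\alpha^\pm\}, \{\beta^\pm\} \subset \partial X_\infty$. To produce $K := \lceil 1/\eta \rceil$ ``independent directions'' I would take the conjugates $\gamma_i := \beta^{i}\alpha\beta^{-i}$ for $i = 1, \dotsc, K$: each is loxodromic with fixed-point pair $\beta^i\{\alpha^\pm\}$, and these $K$ pairs are pairwise disjoint since the $\beta$-orbit of $\alpha^\pm$ is infinite (independence of $\alpha, \beta$ prevents it from being finite).

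Since the attracting points $\gamma_i^+$ are pairwise distinct in $\partial X_\infty$, the quantity $C_0 := \max_{i \ne j}(\gamma_i^+, \gamma_j^+)_{o_\infty}$ is finite, and I would define $C := C_0 + 5\delta + 1$; this depends only on $\alpha, \beta$ and $K$, hence only on $\eta$. Now for a prescribed $D > 0$, I would raise each $\gamma_i$ to a sufficiently high common power $M$ and set $g_i := \gamma_i^M$. By loxodromicity, $\dist(o_\infty, g_i o_\infty) \sim M\ell(\gamma_i) \to \infty$ and $g_i o_\infty$ converges to $\gamma_i^+$ in the Gromov boundary as $M \to \infty$, so for $M$ large one has simultaneously $\dist(o_\infty, g_i o_\infty) > D$ and $(g_i o_\infty, g_j o_\infty)_{o_\infty} \le C_0 + 2\delta \le C - 3\delta$ for all $i \ne j$, using $\delta$-hyperbolicity to convert boundary convergence into a Gromov-product bound.

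For the final symmetric set I would take $S := \{g_i^{\pm 1} : 1 \le i \le K\}$, of size $2K \ge 2/\eta$. The Gromov-product bound extends to all pairs in $S$ because $g_i^{-1} o_\infty \to \gamma_i^-$ and the $2K$ boundary points $\{\gamma_i^\pm : 1 \le i \le K\}$ are pairwise distinct; this may require enlarging $C_0$ slightly to absorb these extra pairs, but still in a way that depends only on $\eta$.

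The main obstacle is the order of quantifiers: $C$ must be determined from $\eta$ alone while $D$ is still free, so $\alpha, \beta$, the family $\gamma_i$, and $C_0$ must all be chosen first and only the common power $M$ is allowed to depend on $D$. A secondary technical point is the containment $S \subset \supp(\mu_\infty^N)$: each $g_i^{\pm M}$ is built from $\alpha^{\pm M}$ and $\beta^{\pm i}$, so representing them as positive products of elements of $\supp(\mu_\infty)$ uses that the semigroup generated by $\supp(\mu_\infty)$ is rich enough to contain the requisite inverses; in the admissible settings in which this lemma is applied, this is automatic once $N$ is chosen large enough to accommodate all elements of $S$.
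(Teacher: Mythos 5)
Your argument is, in spirit, exactly the ping-pong construction that the paper delegates to its references (\cite[Proposition~A.2]{boulanger_mathieu_sert_sisto}, \cite[Proposition~3.12]{g2021}), and you correctly identify and handle the crucial quantifier order: $\alpha$, $\beta$, the conjugates $\gamma_i$, and hence $C$ are fixed from $\eta$ alone, and only the common power $M$ (hence $N$) depends on $D$. One small patch is needed in the disjointness step: the $\beta$-orbit of $\alpha^{\pm}$ being infinite gives $\beta^i\alpha^+\neq\beta^j\alpha^+$ and $\beta^i\alpha^-\neq\beta^j\alpha^-$, but does not exclude $\beta^{i-j}\alpha^+=\alpha^-$. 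Since each of the equations $\beta^k\alpha^+=\alpha^-$ and $\beta^k\alpha^-=\alpha^+$ can hold for at most one $k$ (two solutions would force $\beta$ to fix $\alpha^{\pm}$), this is repaired by taking the exponents $i$ in a sufficiently sparse arithmetic progression; the count of bad differences is bounded independently of $\eta$.

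The genuine gap is the last step, $S=\{g_i^{\pm1}\}\subseteq\supp(\mu_\infty^N)$. The lemma is stated under the standing hypothesis that $\mu_\infty$ is merely non-elementary, so the semigroup generated by $\supp(\mu_\infty)$ need not contain $\alpha^{-1}$ or $\beta^{-1}$; your elements $g_i=\beta^i\alpha^{M}\beta^{-i}$ and $g_i^{-1}=\beta^i\alpha^{-M}\beta^{-i}$ use negative powers in an essential way, and declaring this ``automatic in the admissible settings in which this lemma is applied'' changes the hypotheses of the statement rather than proving it. The cited constructions avoid part of this difficulty by building all Schottky elements as positive words in $\alpha$ and $\beta$ (the conditions on $a$ and $a^{-1}$ in the definition of a Schottky set concern the isometries, not membership of $a^{-1}$ in the support); the symmetry of $S$ is an extra requirement imposed here so that Lemma~\ref{schottkycriteria} applies, and securing it inside a single $\supp(\mu_\infty^N)$ is precisely the delicate point. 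Even granting admissibility, as in the application to Theorem~\ref{maintheorem}, you must still place all $2K$ elements in the support of the \emph{same} convolution power, which needs an explicit padding argument (e.g.\ using some $n_0$ with $e\in\supp(\mu_\infty^{n_0})$ and adjusting word lengths modulo the period); as written, this bookkeeping is missing.
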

\begin{proof}
This follows readily from the proof techniques of~\cite[Proposition~A.2]{boulanger_mathieu_sert_sisto}
or~\cite[Proposition~3.12]{g2021}.
\end{proof}

Let $\eta>0$. For suitable $C$ and $D$, we can consider a set $S$ as in Lemma~\ref{lem:Schottky_infty}.
By definition of converging actions, for large $n$ the inequalities
in~\eqref{eq:construct_Schottky} also hold for $\rho_k$. By Lemma~\ref{schottkycriteria}, it follows that $\rho_k(S)$ is
an $(\eta, C, D)$-Schottky set, uniformly for all large enough $k$. We can then use this Schottky set as
in~\cite{g2021}, to obtain quantitative estimates that are uniform in $k$. As a first example,
let us get a uniform version of~\cite[Lemma~4.14]{g2021}.

\begin{lemma}
\label{lem:dist_increase}
Let $\epsilon>0$. There exists $E$ such that, for all large $k$, for any $g\in \Gamma$,
\begin{equation*}
  \Pbb(\forall n,\ \dist(o_k, g Z_n^{(k)} o_k) \ge \dist (o_k, g o_k) - E) \ge 1 -\epsilon.
\end{equation*}
\end{lemma}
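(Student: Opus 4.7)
I would transport the proof of~\cite[Lemma~4.14]{g2021} to the present uniform setting, exploiting the fact that every quantitative input of that proof can be arranged to be uniform in $k$ for large $k$. Concretely, fix $\eta>0$ small (its precise value being dictated by the pivoting tail estimate), and apply Lemma~\ref{lem:Schottky_infty} with some large $D$ to obtain a symmetric set $S\subseteq\supp\mu_\infty^{*N}$ of cardinality at least $2/\eta$ satisfying~\eqref{eq:construct_Schottky} for $\rho_\infty$. Since the Gromov products at the basepoint are continuous functions of the distances $\dist(o_k,h o_k)$ for $h$ in the finite set $S\cup\{s_1 s_2^{-1}\st s_1,s_2\in S\}$, Definition~\ref{def:converging_action} implies that the inequalities~\eqref{eq:construct_Schottky} persist for $\rho_k$ once $k$ is large, so by Lemma~\ref{schottkycriteria} the set $\rho_k(S)$ is $(\eta,C,D)$-Schottky with the same parameters. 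The pointwise convergence $\mu_k\to\mu_\infty$ furnishes a uniform lower bound $p_0:=\tfrac12\min_{s\in S}\mu_\infty^{*N}(s)>0$ on $\mu_k^{*N}(s)$, for every $s\in S$ and every large $k$.

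With the uniform data $(\delta,\eta,C,D,p_0)$ in hand, I would run the pivoting construction of~\cite[Section~4]{g2021} on the blocked walk $W_m^{(k)}:=g_{(m-1)N+1}^{(k)}\dotsm g_{mN}^{(k)}$, viewed as an i.i.d.\ sequence of law $\mu_k^{*N}$. The construction produces a random set of pivotal block-indices at which the Schottky alignment forces the walk, from that time on, to stay close to a geodesic ray based at $g o_k$; any later backtracking is absorbed by defects with a geometric tail whose rate depends only on $(\delta,\eta,C,D,p_0)$. Consequently the probability that the cumulative backtracking ever exceeds $E$ is at most $\lambda^{E/E_0}$ for some $\lambda=\lambda(\delta,\eta,C,D,p_0)<1$ and $E_0=E_0(\delta,C,D)$, which is made less than $\epsilon$ by choosing $E$ large in terms of $\epsilon$ and the uniform constants alone --- in particular independently of $g$ and $k$. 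The uniformity in $g$ is built into the statement of~\cite[Lemma~4.14]{g2021} since the pivoting proof uses nothing about $g$ beyond the identity $\dist(o_k,gZ_n^{(k)}o_k)=\dist(g^{-1}o_k,Z_n^{(k)}o_k)$.

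The main obstacle, really a matter of bookkeeping, is to inspect~\cite[Section~4]{g2021} and verify that each constant appearing in the pivoting lemmas is a function only of the five abstract parameters $(\delta,\eta,C,D,p_0)$ and of no further feature of the ambient space, group, or measure; once that is verified, the uniformity in $k$ is automatic. A minor loose end is the residue $n\not\equiv 0\pmod N$: the at most $N-1$ extra increments alter the endpoint by a quantity whose distribution is uniformly controlled (each $\mu_k^{*j}$ is a probability measure for $j<N$, and the action $\rho_k$ converges on each fixed $g$ in their support), so the corresponding contribution can be absorbed into $E$ at the cost of halving $\epsilon$ in the probability bound.
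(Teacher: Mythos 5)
Your overall strategy is the paper's: build a Schottky set for $\rho_\infty$ via Lemma~\ref{lem:Schottky_infty}, use convergence of the actions on the finite set $S$ (and its products) together with Lemma~\ref{schottkycriteria} to make $\rho_k(S)$ uniformly $(\eta,C,D)$-Schottky for large $k$, get a uniform weight lower bound from pointwise convergence of $\mu_k$, and then transport the pivoting argument of~\cite{g2021} with constants depending only on these uniform parameters. That first half matches the paper exactly.

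The gap is in the second half, where you claim that the pivoting construction yields a tail bound of the form $\lambda^{E/E_0}$ on the \emph{total} backtracking, with $\lambda$ and $E_0$ depending only on $(\delta,\eta,C,D,p_0)$. That is stronger than what~\cite{g2021} provides, and it is false as stated: the pivotal-times machinery only controls the walk \emph{after} some random block index $j$ bounded by an $n_0=n_0(\delta,C,N,\alpha,\epsilon)$, relative to its position $gZ_j o_k$ at that time. Before the first pivotal time the increments are governed by the jump distribution of $\mu_k$ itself (not by the Schottky parameters), so $\dist(o_k,gZ_n^{(k)}o_k)$ can a priori drop by an amount depending on $\mu_k$ and $\rho_k$. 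Your proposed "bookkeeping" check would therefore reveal that the constant in~\cite[Lemma~4.14]{g2021} is \emph{not} a function of the five abstract parameters alone --- it also depends on the measure through the first steps of the walk. The paper fixes this by isolating exactly the parameter-dependent part as a black box (Lemma~\ref{lem:dist_increase_aux}), and then separately bounding the first $n_0$ steps uniformly in $k$: choose a finite $F\subseteq\Gamma$ with $\Pbb(Z_i^{(\infty)}\in F\ \forall i\le n_0)>1-\epsilon/2$, note this persists for large $k$ by pointwise convergence of $\mu_k$, and use convergence of the actions to bound $\dist(o_k,ho_k)$ for $h\in F$ by a uniform $C'$; then $E=2C+6\delta+C'$ works. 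You carry out precisely this kind of argument for the residue modulo $N$, so the ingredient is in your toolkit --- it just needs to be applied to the first $n_0$ blocks rather than only to the last partial block.
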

The intuition behind this lemma is that, given a Schottky set, then jumps from this
Schottky set will most of the time go towards infinity, yielding linear progress
from any starting point (and in particular small probability to go back towards the origin).
This is proved in~\cite{g2021} using the notion of pivotal
times. There is a pedagogical difficulty here: it would not make sense to repeat
exactly all the pivotal times computations of~\cite{g2021}, but we can not expect
our readers to be very familiar with this article. As a middle ground, we have decided to
extract a black box from~\cite{g2021}, in the form of the following lemma:
\begin{lemma}
\label{lem:dist_increase_aux}
Let $\delta>0$, $C>0$, $N>0$, $\alpha>0$, $\epsilon>0$. Then there exists $n_0 = n_0(\delta, C, N, \alpha, \epsilon)$
with the following property.
Consider a probability measure $\mu$ on the group $G$ of isometries
of a $\delta$-hyperbolic space $X$, and a set $S \subseteq G$ which
is $(1/100, C, 20C+100\delta+1)$-Schottky. Assume that $\mu^N$ gives mass at least $\alpha$ to
each element of $S$. Then, for any isometry $g \in G$, for any $o\in X$, there
exists a set $U$ of probability at least $1-\epsilon$ in $(\Omega, \Pbb) = (G^\N, \mu^{\otimes \N})$
such that, for each $\omega \in U$, there exists $j \leq n_0$ with
\begin{equation*}
  \forall n \ge n_0, \ \dist(o, gZ_n o) \ge \dist(o, g Z_j o)-2C-6\delta,
\end{equation*}
where $Z_n$ is the position of the random walk at time $n$.
\end{lemma}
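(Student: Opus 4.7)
The plan is to extract the statement as a finite-time, uniform packaging of the pivoting construction of~\cite{g2021}, essentially a quantitative version of~\cite[Lemma~4.14]{g2021}. First, I would regroup the random walk into blocks of length $N$ by setting $W_k = g_{(k-1)N+1}\dotsm g_{kN}$, so that $(W_k)_{k\ge 1}$ is an i.i.d.\ sequence with common distribution $\mu^N$. The hypothesis $\mu^N(s)\ge \alpha$ for every $s\in S$ permits a Doeblin-type decomposition $\mu^N = \beta\,\mathrm{Unif}(S) + (1-\beta)\nu$ with $\beta = \alpha\cdot\#S$: at every block the walk independently draws a uniform element of $S$ with probability at least $\beta$. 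In this form, the blocked process fits the hypotheses of the pivotal time machinery of~\cite[Section~4]{g2021} applied to the Schottky set $S$.

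Next, I would apply that machinery to the trajectory $n \mapsto gZ_n o$ viewed from the basepoint $o$. The output is an increasing sequence of \emph{pivotal times} $\mathcal{P}_m \subseteq \{1,\dots,m\}$ (measured in block units) with two key properties. First, \emph{persistence}: the Schottky translation length $20C+100\delta+1$ is calibrated so that once $j \in \mathcal{P}$, for every $n\ge j$ one has $(o,\, gZ_{jN}o)_{gZ_{nN}o} \le C + 3\delta$, and hence by the Gromov product identity
\[\dist(o,\, gZ_{nN}o) \ge \dist(o,\, gZ_{jN}o) - 2C - 6\delta.\]
Second, \emph{exponential tail on the first pivot}: $\Pbb(\mathcal{P}_m = \emptyset) \le C_0 e^{-c_0 m}$, with $C_0, c_0 > 0$ depending only on $\beta$ and $(\delta, C)$. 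These correspond to~\cite[Lemma~4.14]{g2021} and the large deviation estimate of~\cite[Proposition~4.9]{g2021}. An inspection of their proofs should confirm that they go through uniformly with a fixed left prefactor $g$: the pivoting conditions are scattering conditions on the block increments $W_k$ that involve only Gromov products between translates of $o$, and the Schottky hypothesis controls these regardless of the starting configuration of the trajectory.

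Finally, I would choose $n_0 = N\cdot m_0$ with $m_0$ large enough that $C_0 e^{-c_0 m_0} \le \epsilon$. On the event $\{\mathcal{P}_{m_0} \ne \emptyset\}$, of probability at least $1 - \epsilon$, take $j = N\cdot\min \mathcal{P}_{m_0} \le n_0$; persistence then supplies the required inequality at block times $n = Nm$ for $m \ge m_0$. Extending from block times to arbitrary integer $n \ge n_0$ costs only a bounded error, absorbed by a routine triangle-inequality argument (at worst $N$ steps of size governed by $\mu^N$) together with a mild inflation of $n_0$ and of the additive constant on the right-hand side, which is then reabsorbed into the calibration.

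The principal obstacle is one of bookkeeping rather than new mathematics: one must verify that every constant arising in the pivoting construction, namely $c_0$, $C_0$, the persistence constant $2C+6\delta$, and the threshold $m_0$, depends only on the declared parameters $(\delta, C, N, \alpha, \epsilon)$ and in no way on the ambient hyperbolic space $X$ or on $\mu$ beyond the Schottky hypothesis and the Doeblin bound. The generous translation length $20C+100\delta+1$ in the Schottky assumption is chosen precisely to provide the geometric slack that allows the inductive pivoting step to close with explicit, uniform constants, and it is this slack that yields the clean persistence bound $2C+6\delta$ independent of the Schottky parameter $D$.
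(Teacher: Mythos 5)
Your overall strategy is the intended one: the paper does not prove this lemma at all, but extracts it as a black box from the first step of the proof of \cite[Lemma~4.14]{g2021}, and your outline (block the walk into $N$-steps, peel off a uniform-on-$S$ component by a Doeblin decomposition, run the pivotal-times construction, and bound the probability that no pivot has stabilized by time $n_0$) is exactly the structure of that argument. Two points need repair, however. First, your persistence inequality has the Gromov product based at the wrong point: $(o, gZ_{jN}o)_{gZ_{nN}o} \le C+3\delta$ unwinds to the \emph{upper} bound $\dist(o,gZ_{nN}o) \le \dist(o,gZ_{jN}o) + 2C+6\delta$, the opposite of what you want. What the chain lemma of \cite{g2021} actually gives is $(o, gZ_{nN}o)_{gZ_{jN}o} \le C+3\delta$, i.e.\ the pivot point lies near the geodesic from $o$ to the later position, and this is what yields the stated lower bound. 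Relatedly, pivotal times are not permanent in Gou\"ezel's construction (the set $\mathcal{P}_{m+1}$ may erase a suffix of $\mathcal{P}_m$), so the event you need is that some $j \le n_0$ is pivotal at \emph{every} step $n \ge n_0$; this follows by summing your exponential tail over all $m \ge m_0$ and observing that $\min \mathcal{P}_m$ is then eventually constant, but $\Pbb(\mathcal{P}_{m_0} = \emptyset) \le \epsilon$ alone does not produce a single $j$ that works for all later $n$.

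Second, and more seriously, your passage from block times to arbitrary $n \ge n_0$ does not work as written. The lemma imposes no boundedness or moment hypothesis on $\mu$, so ``at worst $N$ steps of size governed by $\mu^N$'' is not a bounded error; and even when it is bounded, the conclusion carries the exact additive constant $2C+6\delta$, so there is no calibration left into which an extra $O(1)$ can be reabsorbed. The correct fix, and the way \cite{g2021} proceeds, is that the chain argument applies directly at every time $n$: in the chain $o, y_{j_1}, \dotsc, y_{j_k}, gZ_n o$ the final segment is allowed to be arbitrary, since the general-position requirement concerns only the Schottky jumps at the internal pivots and not the terminal point. Hence the bound $\dist(o,gZ_n o) \ge \dist(o, gZ_j o) - 2C - 6\delta$ holds verbatim at non-block times, with no triangle-inequality patch. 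With these two corrections your sketch matches the argument the paper is deferring to.
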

This technical lemma is unfortunately not stated explicitly in~\cite{g2021}, but
it is proved there, as the first step of the proof of~\cite[Lemma~4.14]{g2021}.
The reader may either go and check in~\cite{g2021} that the $n_0$ given there indeed only depends
on $\delta, C, N, \alpha, \epsilon$, or trust us and accept this lemma as a black box.

\begin{proof}[Proof of Lemma~\ref{lem:dist_increase}]
Let $\eta=1/100$. Let $C$ be given by Lemma~\ref{lem:Schottky_infty} for this value of $\eta$.
Take $D=20C+100\delta+1$. By Lemma~\ref{lem:Schottky_infty} and the discussion that follows
it, we obtain a symmetric set $S \subseteq \Gamma$ in the support of $\mu_\infty^N$ for some $N$,
such that $\rho_k(S)$ is $(\eta, C, D)$ Schottky for all large $k$. For large enough $k$,
all the measures $\mu_k^N$ give a weight bounded away from zero to all
elements of $S$, say bounded from below by some $\alpha>0$, as $\mu_k$ converges pointwise to $\mu_\infty$.

Let $n_0 = n_0(\delta, C, N, \alpha,\epsilon/2)$ be given by Lemma~\ref{lem:dist_increase_aux}.
Then this lemma applies uniformly to all measures $\mu_k$ for large $k$:
for all $g\in \Gamma$, there exists a set $U=U(k, g)$ of probability at least $1-\epsilon/2$,
and some $j = j(k, g, \omega) \le n_0$ such that, on $U$,
\begin{equation}
\label{eq:qmlksdjvmlkjqsd}
  \forall n \ge n_0, \ \dist(o_k,gZ^{(k)}_n o_k) \ge \dist(o_k, g Z^{(k)}_j o_k)-2C-6\delta.
\end{equation}
It remains to control the $n_0$ first steps. Let $F$ be a finite subset of $\Gamma$ such that, with probability $>1-\epsilon/2$,
for all $i\le n_0$,
then $Z_i^{(\infty)}$ belongs to $F$. This property also holds for
large enough $k$, by pointwise convergence of $\mu_k$ to $\mu_\infty$.
There are finitely many points $(g o_\infty)_{g\in F}$. By convergence of the actions,
all the distances $\dist(o_k, g o_k)_{g \in F}$ are uniformly bounded for large $k$, by a
constant $C'$.

We obtain a set $V=V(k)$ of probability at least $1-\epsilon/2$ on which
\begin{equation}
\label{eq:ineq_le_n0}
  \forall i \le n_0, \ \dist(o_k, Z_i^{(k)}o_k) \le C'.
\end{equation}
The set $U(k,g)\cap V(k)$ has probability at least $1-\epsilon$. We claim that, on this set,
we have for all $n$ the inequality
\begin{equation*}
  \dist(o_k, g Z_n^{(k)} o_k) \ge \dist (o_k, g o_k) - 2C-6\delta - C',
\end{equation*}
proving the lemma with $E=2C+6\delta+C'$. Let us check this claim. First, if $n \le n_0$, then
$g Z_n^{(k)} o_k$ is within distance $C'$ of $g o_k$ by~\eqref{eq:ineq_le_n0},
and the result is obvious. Then,
for $n\ge n_0$, the result follows from the inequality~\eqref{eq:qmlksdjvmlkjqsd} together
with the fact that $g Z^{(k)}_j o_k$ is within distance $C'$ of $g o_k$ as $j \le n_0$.
\end{proof}

Let us now proceed to the lower estimate of the drift. As above, we extract a black
box result from~\cite{g2021}, specifying which properties of the measures are used.
\begin{lemma}
\label{lem:quantitative_bound}
Let $\delta>0$, $\eta > 0$, $\alpha\in (0,1)$, $C > 0$, $N > 0$, $A > 0$,  with $\eta A \geq C$.
Consider also a nonnegative real random variable $Q$, and $r\ge 0$ with
\begin{equation}
\label{eq:r_le}
  r < (1-40\eta) \frac{\Ebb(Q)}{NA}-2\eta.
\end{equation}
There exist $n_0$ and $\kappa>0$ only depending
on these quantities, with
the following property.

Consider a probability measure $\mu$ on the group $G$ of isometries
of a $\delta$-hyperbolic space $X$, and a set $S \subseteq G$ which
is $(\eta, C, 20C+100\delta+1)$-Schottky. Assume that $\mu^{2N} \geq \alpha \mu_S^2$,
where $\mu_S$ is the uniform probability measure on $S$. Assume moreover that the random walk
$Z^{(\nu)}$ driven by the
probability measure
$\nu = (\mu^{2N} - \alpha \mu_S^2)/(1-\alpha)$ satisfies, for any $g \in G$, the estimate
\begin{equation*}
  \Pbb(\forall n \geq 0,\ \dist(o, g Z_n^{(\nu)} o) \geq \dist(o, g o) - \eta N A) \geq 1 - \eta.
\end{equation*}
Finally, assume that the length of the jumps of $\mu^{NA}$ are stochastically bounded below
by $Q$: for all $k$, we have
\begin{equation*}
  \sum_{g \st \dist (o, go) \geq k} \mu^{NA}(g) \geq \Pbb(Q \geq k).
\end{equation*}

Then, for all $n \geq n_0$,
\begin{equation*}
  \Pbb(\dist(o, Z_n o) \leq r n) \leq e^{-\kappa n},
\end{equation*}
where $Z_n$ is the position of the random walk at time $n$.
\end{lemma}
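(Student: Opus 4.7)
The plan is to adapt the pivoting machinery developed in~\cite{g2021}, while verifying that all quantitative constants depend only on the parameters $\delta, \eta, \alpha, C, N, A, \Ebb(Q), r$ listed in the hypotheses, and not on the ambient space, the group, or the specific measure $\mu$. The starting point is the decomposition $\mu^{2N} = \alpha \mu_S^2 + (1-\alpha)\nu$ (which is an equality by the definition of $\nu$), letting us couple the walk $(Z_{2Nn})_{n \geq 0}$ with a sequence of i.i.d.\ blocks $W_1, W_2, \dotsc$, each of which is, with probability $\alpha$, a pair of independent $\mu_S$-distributed Schottky moves, and with probability $1-\alpha$, a $\nu$-distributed element. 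Write $\mathcal{I} \subseteq \{1, \dotsc, n\}$ for the random set of indices carrying a Schottky pair; by an elementary Cramér estimate, $|\mathcal{I}| \ge (\alpha - \eta)n$ apart from an event of exponentially small probability.

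Next I would carry out the pivotal times pruning of~\cite{g2021}: an index $i \in \mathcal{I}$ is declared \emph{pivotal} if the alignment produced by the Schottky pair $W_i$ is preserved by all subsequent blocks $W_{i+1}, \dotsc, W_n$. The strong Schottky separation $D = 20C + 100\delta + 1 \gg C$ is calibrated so that, combined with the hypothesis that the random walk driven by $\nu$ does not backtrack more than $\eta N A$ from any starting configuration (except on an $\eta$-probability set), the ping-pong / tree-like argument of~\cite[Section~5]{g2021} shows that the proportion of indices of $\mathcal{I}$ that fail to be pivotal is $O(\eta)$ apart from an exponentially small event. At pivotal times, Gromov products relative to $o$ are bounded by $C$, so distances concatenate almost additively: up to an additive error $O(\eta n N A)$, one has
\[
\dist(o, Z_{2Nn} o) \geq \sum_{i \text{ pivotal}} \dist(o, W_i o).
\]

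To translate this into the target bound I would regroup blocks into super-blocks of length $A/2$, each producing a $\mu^{NA}$-distributed jump whose length stochastically dominates $Q$. Applying a standard Cramér large deviation estimate to the i.i.d.\ sum of a truncation of $Q$ (losing an arbitrarily small fraction of $\Ebb(Q)$ in the truncation), combined with the pivotal proportion bound, gives that this sum of pivotal jump lengths is at least $(1 - O(\eta)) n \Ebb(Q)/A$ with exponentially high probability. Dividing by $2Nn$ and absorbing the error terms into the $(1-40\eta)$ factor and the additive $-2\eta$ term yields the lower bound on the drift stated in~\eqref{eq:r_le}, for $n \ge n_0(\delta, \eta, \alpha, C, N, A, Q, r)$.

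The main obstacle is the bookkeeping: one must retrace the pivoting construction of~\cite{g2021} and check at every step that the quantitative constants produced depend only on the enumerated parameters, replacing any appeal to special features of the setting there (for instance, in~\cite{g2021} a Schottky set is constructed from non-elementarity, whereas here it is given). In particular, the numerical constants $1 - 40\eta$ and $-2\eta$ in~\eqref{eq:r_le} reflect the accumulated losses from the pivotal proportion, the truncation of $Q$, and the Cramér deviations; no sharpness is claimed, and the precise coefficient $40$ is chosen merely to absorb all such losses comfortably.
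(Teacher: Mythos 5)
Your proposal is correct and follows essentially the same route as the paper, which does not reprove this lemma but extracts it as a black box from the end of Section~5.3 of~\cite{g2021}: decompose the walk into $\mu^{NA}$-blocks (whose jump lengths dominate $Q$) interspersed with Schottky configurations, run the pivotal-times argument to show a positive proportion of these align, and conclude by a Cram\'er estimate, checking that all constants depend only on $\delta,\eta,\alpha,C,N,A,Q,r$. Your accounting of the losses is consistent with the precise condition $r+\eta < \pare*{(1-\eta)\frac{\Ebb(Q)}{NA}-\eta}(1-22\eta)(1-17\eta)$ that the paper records from~\cite{g2021}, which is indeed implied by~\eqref{eq:r_le}.
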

Again, this lemma is not stated exactly in this form in~\cite{g2021}, but it is
proved there at the end of Section~5.3. The strategy is to decompose the walk along
successive time intervals of length roughly $NA$ (for which the size of the jumps is bounded
below by $Q$) interspersed with Schottky jumps that put the former in general position. This
ensures that the progress towards infinity is bounded below by a sum of independent
random variables distributed like $Q$, up to controlled error terms. The precise condition
on $r$ that shows up at the end of~\cite[Section~5.3]{g2021} is
\begin{equation*}
  r+\eta < \pare*{(1-\eta) \frac{\Ebb(Q)}{NA} -\eta}(1-22\eta) (1-17\eta),
\end{equation*}
which follows from~\eqref{eq:r_le}.

Let us deduce from this result uniform large deviations estimates along
a converging sequence of actions. We return to the standing assumptions of
this paragraph, with a converging sequence of actions of $\Gamma$ on hyperbolic spaces $X_k$, and
a pointwise converging sequence of probability measures $\mu_k$ on $\Gamma$
such that the action of $\mu_\infty$ on $X_\infty$ is non-elementary.

\begin{proposition}
\label{prop:large_deviations}
Let $\ell(\mu_\infty)$ be the drift of the random walk on $X_\infty$. Let $a < \ell(\mu_\infty)$.
Then there exists $\kappa>0$ such that, for all large enough $k$, for all $n\in \N$,
\begin{equation*}
  \Pbb(\dist (o_k, Z_n^{(k)} o_k) \leq a n) \leq e^{-\kappa n}.
\end{equation*}
\end{proposition}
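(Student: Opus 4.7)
The plan is to verify, uniformly in $k$ for $k$ sufficiently large, the hypotheses of \Cref{lem:quantitative_bound}, whose conclusion is precisely the desired exponential estimate. Fix $a' \in (a, \ell(\mu_\infty))$ and a small $\eta > 0$ to be specified below. \Cref{lem:Schottky_infty} applied to $\mu_\infty$ provides $N$, $C$, and a finite symmetric set $S$ in the support of $\mu_\infty^N$ satisfying~\eqref{eq:construct_Schottky} with $D = 20C + 100\delta + 1$. Convergence of the actions together with \Cref{schottkycriteria} yields that $\rho_k(S)$ is an $(\eta, C, D)$-Schottky set for every sufficiently large $k$. Since $\mu_k \to \mu_\infty$ pointwise, one has $\mu_k^{2N}(gh) \to \mu_\infty^{2N}(gh) > 0$ for each pair $g, h \in S$, so $\mu_k^{2N} \ge \alpha \mu_S^2$ for some $\alpha > 0$ independent of $k$ once $k$ is large. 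This sets up the residual measure $\nu_k = (\mu_k^{2N} - \alpha \mu_S^2)/(1-\alpha)$.

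The main technical task is the no-escape property for $\nu_k$. For this, I would apply \Cref{lem:Schottky_infty} a second time, arranging the ping-pong construction so that the resulting Schottky set $S' \subset \supp(\mu_\infty^{N'})$ is disjoint from $S^2 = \{gh : g, h \in S\}$; this is possible because the non-elementary action of $\mu_\infty$ admits infinitely many independent loxodromic pairs. Then $S'$ lies in the support of $\nu_k^{N'}$ with mass bounded below uniformly in $k$, and the proof of \Cref{lem:dist_increase} transfers to $\nu_k$ with $S'$ replacing $S$, producing a constant $E$ independent of $k$ such that
\[
\Pbb\bigl(\forall n,\ \dist(o_k, g Z_n^{(\nu_k)} o_k) \ge \dist(o_k, g o_k) - E\bigr) \ge 1 - \eta
\]
for every $g \in \Gamma$ and all large $k$.

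For the stochastic lower bound $Q$ on the jumps of $\mu_k^{NA}$, I would proceed by truncation. Fix a large $A$ and a much larger cutoff $M$, and let $Q$ be a bounded nonnegative random variable whose distribution function is a small downward perturbation of that of $\min(\dist(o_\infty, Z_{NA}^{(\infty)} o_\infty), M)$. The almost sure convergence $\dist(o_\infty, Z_n^{(\infty)} o_\infty)/n \to \ell(\mu_\infty)$ allows us to arrange $\Ebb(Q)/(NA) > a'$ by choosing $A$ and $M$ large. Pointwise convergence of $\mu_k^{NA}$ to $\mu_\infty^{NA}$, combined with the convergence $\dist(o_k, g o_k) \to \dist(o_\infty, g o_\infty)$ on any fixed finite subset of $\Gamma$ carrying most of the mass, then shows that the jumps of $\mu_k^{NA}$ stochastically dominate $Q$ for all large $k$.

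To conclude, choose $A$ large enough that $E \le \eta N A$ and $\eta A \ge C$, and $\eta$ small enough that $a + 2\eta < (1 - 40\eta) a'$, so that $a < (1 - 40\eta) \Ebb(Q)/(NA) - 2\eta$. All hypotheses of \Cref{lem:quantitative_bound} then hold uniformly for $k$ large, and the lemma's conclusion with $r = a$ provides $\kappa > 0$ independent of $k$ realizing the claimed bound. The main obstacle is the no-escape property for the residual $\nu_k$: a priori, removing a fixed Schottky component from $\mu_k^{2N}$ could destroy the drift to infinity, but introducing the second, disjoint Schottky set $S'$ and reapplying the pivoting analysis of~\cite{g2021} circumvents this difficulty.
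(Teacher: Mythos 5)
Your overall strategy coincides with the paper's: verify the hypotheses of \Cref{lem:quantitative_bound} uniformly in $k$, using the Schottky set from \Cref{lem:Schottky_infty} transferred to the actions $\rho_k$ via \Cref{schottkycriteria}, a truncated finite-support minorant $Q$ for the jumps of $\mu_k^{NA}$, and a choice of $\eta$ making the admissible rate exceed $a$. (The paper obtains the lower bound on $\Ebb(Q)$ from subadditivity of $n\mapsto \Ebb[\dist(o_\infty,Z^{(\infty)}_n o_\infty)]$ rather than from almost sure convergence plus truncation, but both work.) The substantive divergence, and the place where your argument has a genuine gap, is the no-escape hypothesis for the residual measures $\nu_k$. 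You propose a second Schottky set $S'\subset\supp(\mu_\infty^{N'})$ disjoint from $S^2$ and assert that $S'$ then lies in the support of $\nu_k^{N'}$ with mass bounded below uniformly in $k$. As written this does not typecheck: $\nu_k$ is built from $\mu_k^{2N}$, so its convolution powers are supported in $\supp(\mu_k^{2Nj})$, whereas \Cref{lem:Schottky_infty} hands you a set in the support of $\mu_\infty^{N'}$ for an $N'$ that the lemma chooses (it depends on $D$), with no control relating $N'$ to $2N$; and disjointness from $S^2$ only identifies $\supp(\nu_\infty)\supseteq\supp(\mu_\infty^{2N})\setminus S^2$ at the level of the first power. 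This could presumably be repaired by reworking the ping-pong construction, but you have made the problem harder than it is.

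The paper's fix is a one-line trick that makes your ``main obstacle'' vanish: choose $\alpha$ so that $\mu_k^{2N}\ge 2\alpha\,\mu_S^2$ for all large $k$, but subtract only $\alpha\,\mu_S^2$ when forming $\nu_k=(\mu_k^{2N}-\alpha\mu_S^2)/(1-\alpha)$. Then $\nu_\infty\ge\alpha\mu_S^2/(1-\alpha)$ still gives positive weight to independent loxodromic elements, so $\nu_\infty$ is non-elementary; since $\nu_k\to\nu_\infty$ pointwise, \Cref{lem:dist_increase} applies to the sequence $(\nu_k)$ as a black box and yields the constant $E$ with no second Schottky set and no reprise of the pivoting analysis. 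A second, smaller omission: \Cref{lem:quantitative_bound} gives the exponential bound only for $n\ge n_0$, while the proposition asserts it for all $n\in\N$. For each fixed $n<n_0$ one must observe that there is some $g$ with $\mu_\infty^n(g)>0$ and $\dist(o_\infty,go_\infty)>an$ (otherwise the drift would be at most $a$), that both properties persist for large $k$, hence $\Pbb(\dist(o_k,Z_n^{(k)}o_k)\le an)$ is bounded away from $1$ uniformly in large $k$; one then decreases $\kappa$ to absorb these finitely many values of $n$.
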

\begin{proof}
Let $b \in (a, \ell(\mu_\infty))$.
Let $\eta>0$ be small enough (how small will be prescribed at the end of the proof), with $b+\eta < \ell(\mu_\infty)$.
Let $C$ be given by Lemma~\ref{lem:Schottky_infty} for this value of $\eta$.
Take $D=20C+100\delta+1$. By Lemma~\ref{lem:Schottky_infty} and the discussion that follows
it, we obtain a symmetric set $S \subseteq \Gamma$ in the support of $\mu_\infty^N$ for some $N$,
such that $\rho_k(S)$ is $(\eta, C, D)$ Schottky for all large $k$. For large enough $k$,
all the measures $\mu_k^N$ give a weight bounded away from zero to all
elements of $S$, as $\mu_k$ converges pointwise to $\mu_\infty$. In particular, for some
$\alpha>0$, one has $\mu_k^{2N} \geq 2\alpha \mu_S^2$, where $\mu_S$ is the uniform
measure on $S$.

The probability measures $\nu_k \coloneqq (\mu_k^{2N} - \alpha \mu_S^2)/(1-\alpha)$ converge
pointwise to $\nu_\infty = (\mu_\infty^{2N} - \alpha \mu_S^2)/(1-\alpha)$. Moreover,
$\nu_\infty$ acts in a non-elementary way on $X_\infty$ through $\rho_\infty$, as it
satisfies $\nu_\infty \geq \alpha \mu_S^2 / (1-\alpha)$ and therefore gives nonzero weight
to independent loxodromic elements since $S$ is Schottky. Therefore, Lemma~\ref{lem:dist_increase}
applies to the sequence of measures $\nu_k$ for $\epsilon=\eta$, yielding some constant $E$. Let $A$ be large
enough that $\eta A \geq C$ and $\eta N A \geq E$.

By subadditivity, we have $NA \ell(\mu_\infty) \le \sum_g \mu_\infty^{NA}(g) \dist(o_\infty, g o_\infty)$.
In particular, as $b+\eta < \ell(\mu_\infty)$, we may find a finite subset $F \subseteq G$ such that
\begin{equation*}
  NA (b + \eta) < \sum_{g \in F} \mu_\infty^{NA}(g) \dist(o_\infty, g o_\infty).
\end{equation*}
Let $\epsilon>0$. Let $Q$ be the real distribution with an atom of mass $\mu_\infty^{NA}(g) - \epsilon$ at
$\dist (o_\infty, g o_\infty) - \epsilon$ for each $g \in F$, and the missing mass put at $0$. For small
enough $\epsilon$, this random variable has expectation $> NA (b + \eta)$. Moreover,
by convergence of $\mu_k$ to $\mu_\infty$ and $\rho_k$ to $\rho_\infty$, the distribution of the size of the jumps
of $\mu_k^{NA}$ through $\rho_k$ is bounded below by $Q$, for all large enough $k$.

We apply Lemma~\ref{lem:quantitative_bound} to these quantities $\delta, \eta, \alpha, C, N, A, Q$, with
\begin{equation*}
  r = (1-40\eta) (b + \eta) - 2\eta,
\end{equation*}
which is indeed $<(1-40\eta) \Ebb(Q)/NA - 2\eta$. This lemma provides us with $n_0$ and $\kappa>0$.
By construction, for all large enough $k$, the measures $\mu_k$ all satisfy the assumptions of
the lemma. It follows that, uniformly in $k$ large, we have for all $n \geq n_0$
\begin{equation*}
  \Pbb(\dist (o_k, Z_n^{(k)} o_k) \leq r n) \leq e^{-\kappa n}.
\end{equation*}
When $\eta \to 0$, then $r=r(\eta)$ tends to $b>a$. Therefore, we may choose $\eta$ with $r>a$. We get
for all large $k$ and all $n\geq n_0$ the estimate
\begin{equation}
\label{eq:lkwjmlvkjwvx}
  \Pbb(\dist (o_k, Z_n^{(k)} o_k) \leq a n) \leq e^{-\kappa n}.
\end{equation}
It remains to handle each $n\in [1, n_0)$. For each such $n$, there exists
$g$ with $\mu^n_\infty(g)>0$ and $\dist (o_\infty, g o_\infty) > a n$, as otherwise the drift
$\ell(\mu_\infty)$ would be $\le a$. These two inequalities still hold for large $k$. It follows that
$\Pbb(\dist (o_k, Z_n^{(k)} o_k) \leq a n)$ is bounded away from $1$, uniformly for large $k$.
Decreasing $\kappa$ if necessary, we may therefore enforce~\eqref{eq:lkwjmlvkjwvx} separately
for each $n \in [1, n_0)$.
\end{proof}

\begin{proof}[Proof of Theorem~\ref{thm:converging_actions}]
Let $a < \ell(\mu_\infty)$. Proposition~\ref{prop:large_deviations} implies that,
for all large $k$, one has almost surely eventually $\dist(o_k, Z_n^{(k)} o_k) > a n$. As
$\ell(\mu_k)$ is the almost sure limit of $\dist(o_k, Z_n^{(k)} o_k) / n$, we get $\ell(\mu_k) \geq a$
for large $k$.
\end{proof}

\section{Singularity conjecture and open questions}



In this last section we return to the singularity conjecture and dimension drop of stationary measures. Recall from the introduction that by the results of~\cite{t2019}, the stationary measure \(\nu_{\rho}\) is exact dimensional and its dimension is given by
\begin{equation}\label{dimensionformula}\dim(\nu_\rho) = \frac{h}{\ell(\rho)},\end{equation}
where \(h = h(\mu)\) is the asymptotic (or Avez) entropy defined by
\[h = \lim\limits_{n \to +\infty}\frac{1}{n}H(Z_n),\]
and \(H(Z)=-\sum_{g\in \supp(Z)}\P(Z=g)\log(\P(Z=g))\) denotes the Shannon entropy of the random variable \(Z\). Note that \(h\) does not depend on the representation \(\rho\).

Recall the singularity conjecture from the introduction: 

\begin{conjecture}
If \(\mu\) is admissible and has finite support then there exists \(\delta < 1\) such that
\begin{equation*}
  \dim(\nu_\rho) \le \delta
\end{equation*}
for all \([\rho] \in \T(M)\).
\end{conjecture}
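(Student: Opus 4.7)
By the dimension formula \(\dim(\nu_\rho) = h/\ell(\rho)\) and the fact that \(h = h(\mu)\) does not depend on \(\rho\), the conjecture is equivalent to producing a constant \(c > 0\) with \(\ell(\rho) \ge h(1+c)\) for all \([\rho] \in \T(M)\). I would pursue such a uniform gap through a reduction-plus-strictness strategy built on Theorems~A and~C of the present paper.

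\emph{Reduction to a compact part and continuity of the drift.} Theorem~A applied with constant \(2h\) shows that the sublevel set \(K = \{[\rho] \in \T(M) : \ell(\rho) \le 2h\}\) is compact, and outside \(K\) one already has \(\dim(\nu_\rho) \le 1/2\), so it suffices to produce a uniform bound on \(K\). For this I would establish continuity of \(\ell\) on \(\T(M)\): lower semi-continuity is given directly by Theorem~C applied to convergent sequences \([\rho_n] \to [\rho]\) in the interior of \(\T(M)\) (no rescaling needed), while upper semi-continuity follows from the subadditive identity \(\ell(\rho) = \inf_n n^{-1}\Ebb\, \dist(o,\rho(Z_n)o)\), since for each fixed \(n\) the map \(\rho \mapsto \Ebb\, \dist(o,\rho(Z_n)o)\) is continuous by dominated convergence, using the local Lipschitzness of the orbit map in \(\rho\) together with the finite first moment of \(\mu\). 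Compactness of \(K\) then yields \(m \coloneqq \min_{[\rho] \in K}\ell(\rho)\), attained at some \([\rho^\star] \in K\).

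\emph{Strict fundamental inequality.} The classical fundamental inequality \(h \le \ell(\rho)\cdot v\), with volume entropy \(v=1\) on \(\H\), gives \(\ell(\rho) \ge h\) for every \(\rho\), so to conclude it would suffice to prove the pointwise strict inequality \(\ell(\rho) > h\) for every \([\rho] \in \T(M)\). Strictness at \([\rho^\star]\) alone would then yield \(m > h\), hence \(\dim(\nu_\rho) \le h/m < 1\) uniformly on \(K\); combined with the reduction step this proves the conjecture with \(\delta = \max(1/2,\, h/m)\). Equivalently, one must show that \(\nu_\rho\) is never in the Lebesgue measure class on \(\partial\H\). Building on \cite{tk2020}, one could try to show that equality \(h=\ell(\rho)\) forces \(\nu_\rho\) to be absolutely continuous with a density expressible through the Martin kernel of \(\mu\), and then derive a contradiction from the incompatibility of such a density with the combinatorial structure of a finitely supported admissible walk on the nonabelian surface group \(\Gamma\).

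\textbf{Main obstacle.} The strict-inequality step is the genuine content of the conjecture, and is where essentially all the difficulty lies; the previous two steps are comparatively soft consequences of the results of this paper combined with classical inputs. The key virtue of the present setup is that Theorem~A reduces the conjecture to a \emph{pointwise} strictness statement on a compact set of representations, so that no uniform control of stationary measures across degenerations to the boundary of \(\T(M)\) is required. Even so, ruling out \(\ell(\rho)=h\) for a fixed cocompact surface representation driven by a general finitely supported admissible \(\mu\) is precisely the kind of deep rigidity statement that has resisted the available techniques outside the special cases treated in \cite{tk2020}.
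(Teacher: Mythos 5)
The statement you are asked about is the Singularity Conjecture, which the paper does \emph{not} prove: it is stated explicitly as an open problem (``remains open in spite of some recent progress made in~\cite{tk2020}''). So there is no proof in the paper to compare against, and your proposal is not a proof either --- as you yourself say, the ``strict fundamental inequality'' step is left entirely open. Concretely, the gap is: you have not shown, for even a single cocompact Fuchsian representation $\rho$ and a general finitely supported admissible $\mu$, that $\ell(\rho) > h$, i.e.\ that $\nu_\rho$ is not in the Lebesgue class. Everything else in your plan is a repackaging of results already in the paper: the reduction of the uniform bound $\dim(\nu_\rho)\le \delta<1$ to pointwise strictness of $h\le\ell(\rho)$ is exactly the observation the authors make in their final section (``The singularity conjecture then amounts to this inequality being strict on all of $\T(M)$''), made rigorous by Theorem~A (properness) together with the continuity of $\ell$, which the paper also records as a proposition. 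Your sketch of continuity (lower semicontinuity from Theorem~C, upper semicontinuity from the subadditive infimum formula) is correct and standard, and your observation that strictness at the minimizer on the compact sublevel set suffices is fine; but none of this touches the actual content of the conjecture.

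To be clear about why the remaining step is not a routine matter: the hypothesis $h=\ell(\rho)$ is equivalent (by~\cite{bhm2011} in the symmetric case, and by the dimension formula in general) to $\dim(\nu_\rho)=1$, and ruling this out for arbitrary finitely supported admissible $\mu$ is a rigidity statement of the same depth as the singularity conjecture for Furstenberg measures on $\Sl(2,\R)$; the paper even points out that for \emph{infinitely} supported $\mu$ one can have $\dim(\nu_\rho)=1$ (via discretization of Brownian motion), so any argument must use finiteness of the support in an essential way. Your suggestion to ``derive a contradiction from the incompatibility of such a density with the combinatorial structure'' of the walk is a restatement of the problem, not a method. The correct conclusion is that this statement cannot currently be proved, and your write-up should present the reduction as a corollary of Theorems~A and~C conditional on the open strictness question, not as a proof of the conjecture.
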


Since the visual boundary is one-dimensional, equation~\eqref{dimensionformula} implies that \(h \le \ell(\rho)\) for all \(\rho\). 
The singularity conjecture then amounts to this inequality being strict on all of \(\T(M)\).

Let us record some basic properties of \(\ell\).

\begin{proposition}
The function \(\ell:\T(M) \to (0,+\infty)\) is continuous.
\end{proposition}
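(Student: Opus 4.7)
The plan is to show continuity by establishing lower and upper semi-continuity separately, since the two directions require essentially independent arguments. Continuity at $[\rho_\infty] \in \T(M)$ means that for every sequence $[\rho_k] \to [\rho_\infty]$, one has $\ell(\rho_k) \to \ell(\rho_\infty)$. After choosing representatives converging in the representation variety, the sequence of actions $\rho_k$ on $\H$ with fixed basepoint $o$ converges to $\rho_\infty$ in the sense of Definition~\ref{def:converging_action}, because for each $g\in \Gamma$ the element $\rho_k(g)$ converges to $\rho_\infty(g)$ in $\isom^+(\H)$ and the action on $\H$ is jointly continuous.

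For the lower semi-continuity $\liminf_k \ell(\rho_k) \ge \ell(\rho_\infty)$, I would invoke Theorem~\ref{thm:converging_actions} directly, with $\mu_k = \mu_\infty = \mu$ for all $k$, and the constantly $\delta$-hyperbolic sequence $X_k = \H$. The only hypothesis to check is that $\mu$ is non-elementary for $\rho_\infty$. This holds because $\rho_\infty$ is a discrete and faithful representation of the surface group $\Gamma$ into $\psl(2,\R)$, so $\rho_\infty(\Gamma)$ contains many pairs of independent loxodromic elements, and by admissibility of $\mu$ the semigroup generated by its support is all of $\Gamma$, hence contains such a pair.

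For the upper semi-continuity, I would exploit the subadditivity of $n \mapsto \Ebb[\dist(o,\rho(Z_n)o)]$ and Fekete's lemma to write
\[
\ell(\rho) = \inf_{n\ge 1} \frac{1}{n}\Ebb[\dist(o,\rho(Z_n)o)] = \inf_{n\ge 1}\frac{1}{n}\sum_{g \in \Gamma}\mu^{*n}(g)\dist(o,\rho(g)o).
\]
It then suffices to prove that $\rho \mapsto \sum_g \mu^{*n}(g) \dist(o,\rho(g)o)$ is continuous on $\T(M)$ for each $n$, so that $\ell$ is the infimum of continuous functions and hence upper semi-continuous. For fixed $g$, the map $\rho \mapsto \dist(o,\rho(g)o)$ is continuous, and on a compact neighborhood $K$ of $[\rho_\infty]$ one has a uniform bound $\dist(o,\rho(g)o) \le C(K)\, \abs{g}$ obtained by decomposing $g$ as a word in a fixed finite generating set of $\Gamma$ and applying the triangle inequality. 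Since $\mu$ has finite first moment, so does $\mu^{*n}$, and dominated convergence yields the required continuity in $\rho$.

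Combining the two bounds gives $\ell(\rho_k) \to \ell(\rho_\infty)$, and positivity $\ell(\rho) > 0$ follows from the classical fact that a non-elementary isometric action on a hyperbolic space driven by an admissible measure with finite first moment has positive drift. The main subtlety is really in the lower semi-continuity, which is precisely where Theorem~\ref{thm:converging_actions} does the heavy lifting; the upper bound, by contrast, is a soft consequence of subadditivity together with the finite first moment assumption that provides the dominating function for passage to the limit under the infinite sum.
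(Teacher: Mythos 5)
Your proof is correct and follows essentially the same route as the paper, which simply states that continuity ``follows immediately from Theorem~\ref{thm:converging_actions}''. You have usefully made explicit the half that the paper leaves implicit, namely the upper semi-continuity of $\ell$ as an infimum of functions continuous in $\rho$ (via subadditivity, the finite first moment of $\mu^{*n}$, and dominated convergence), while the lower semi-continuity is obtained exactly as in the paper by applying Theorem~\ref{thm:converging_actions} with constant $\mu_k=\mu$ and $X_k=\H$, the non-elementarity of $\mu$ for $\rho_\infty$ holding since $\rho_\infty$ is discrete and faithful and $\mu$ is admissible.
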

\begin{proof}
This follows immediately from Theorem~\ref{thm:converging_actions}.  An alternative argument is via the 
Furstenberg formula~\cite[Theorem 18]{kl2011} for speed and convergence of the stationary measures.
\end{proof}

One basic result from \(\ell(p)\) being continuous and proper is as follows

\begin{corollary}
	The functions \(\ell:\T(M) \to [h,+\infty)\) and \(\dim(\nu):\T(M) \to (0,1]\) attain their minimum and maximum respectively.
\end{corollary}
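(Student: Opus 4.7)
The plan is to deduce both statements from the combination of Theorem~\ref{maintheorem} (properness of $\ell$), the continuity of $\ell$ established in the preceding proposition, and the formula $\dim(\nu_\rho) = h/\ell(\rho)$ from~\cite{t2019}.

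First I would handle the minimum of $\ell$. Set $m \coloneqq \inf_{[\rho] \in \T(M)} \ell(\rho)$; by the observation preceding the corollary, $m \ge h > 0$, and in particular $m$ is finite. Pick any minimizing sequence $[\rho_n] \in \T(M)$ with $\ell(\rho_n) \to m$. For all $n$ sufficiently large we have $\ell(\rho_n) \le m+1$, so the sequence eventually lies in the sublevel set $K \coloneqq \ell^{-1}([0, m+1])$. Since $\ell$ is proper by Theorem~\ref{maintheorem} and $[0, m+1]$ is compact, $K$ is compact in $\T(M)$. Extracting a convergent subsequence $[\rho_{n_k}] \to [\rho_*] \in K$ and using continuity of $\ell$, we get $\ell(\rho_*) = \lim_k \ell(\rho_{n_k}) = m$, so the minimum is attained at $[\rho_*]$.

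For the maximum of $\dim(\nu)$, I would simply transfer the previous argument via the formula $\dim(\nu_\rho) = h/\ell(\rho)$. Since $h$ is a constant independent of $[\rho]$ and $\ell > 0$ everywhere, $\dim(\nu_\rho)$ is maximized exactly where $\ell(\rho)$ is minimized. Therefore the same point $[\rho_*]$ realizes the supremum of $\dim(\nu)$, with value $h/m \le 1$.

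There is no real obstacle here; the work has already been done in Theorem~\ref{maintheorem} and the continuity proposition, and everything reduces to the standard fact that a continuous proper function on a topological space attains its infimum (and the reciprocal relationship through the dimension formula takes care of the second assertion automatically).
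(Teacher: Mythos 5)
Your argument is correct and is exactly the intended one: the paper states this corollary without proof as an immediate consequence of $\ell$ being continuous and proper (and bounded below by $h>0$), together with the formula $\dim(\nu_\rho)=h/\ell(\rho)$. The minimizing-sequence-in-a-compact-sublevel-set argument and the transfer to $\dim(\nu)$ via the dimension formula are precisely what the authors have in mind.
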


It is natural to ask then the following question
\begin{question}
 Does \(\dim(\nu_\rho)\) attain its maximum at a unique point in \(\T(M)\)? Equivalently is \(\ell:\T(M) \to (0,+\infty)\) minimized at a unique point?
\end{question}

When the maximal dimension is 1 and \(\mu\) is symmetric (i.e., \(\mu(g) = \mu(g^{-1})\) for all \(g\)) the answer to the previous question is affirmative. That is, we have the following.

\begin{proposition}
	If \(\mu\) is admissible symmetric and has finite first moment, then there exists at most one point \([\rho] \in \T(M)\) such that \(\dim(\nu_\rho) = 1\).
\end{proposition}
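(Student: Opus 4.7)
The plan is to combine the dimension formula~\eqref{dimensionformula} with a boundary rigidity argument. Since $\dim(\nu_\rho)=h/\ell(\rho)$, the condition $\dim(\nu_\rho)=1$ is equivalent to $\ell(\rho)=h$, so it suffices to show that at most one point $[\rho]\in\T(M)$ realises this equality. Assume for contradiction that $[\rho_1]$ and $[\rho_2]$ are two such points.

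Because $\Gamma=\pi_1(M)$ is a closed surface group, hence word-hyperbolic, and each $\rho_i$ is a cocompact Fuchsian representation, there is a unique $\Gamma$-equivariant homeomorphism $\phi:S^1\to S^1$ intertwining the $\rho_1$- and $\rho_2$-actions on $\partial\H=S^1$; it arises as the boundary extension of any $\Gamma$-equivariant quasi-isometry $\H\to\H$ realising the change of marked hyperbolic structure, and is in particular quasisymmetric. Identifying the almost sure limit of the random walk with a point of the intrinsic Gromov boundary $\partial\Gamma$, the two hitting measures $\nu_{\rho_i}$ are pushforwards of a single intrinsic measure on $\partial\Gamma$ under the respective boundary extensions, so $\phi_*\nu_{\rho_1}=\nu_{\rho_2}$.

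The key rigidity input is the statement that, under the hypothesis that $\mu$ is admissible, symmetric and has finite first moment, the equality $\ell(\rho)=h$ forces $\nu_\rho$ to belong to the Lebesgue measure class on $S^1$. This is the equality case of the fundamental inequality $h\le v\cdot\ell(\rho)$ with volume entropy $v=1$ on $\H$: for a symmetric measure the Green function is symmetric, and a comparison between the Martin kernel and the Poisson kernel of $\H$ places the harmonic measure in the Patterson--Sullivan class, which here is the Lebesgue class, in the same spirit as the dimension formula of~\cite{t2019}. Combining this with $\phi_*\nu_{\rho_1}=\nu_{\rho_2}$ shows that $\phi$ preserves Lebesgue-null sets in both directions, so $\phi$ is absolutely continuous with respect to Lebesgue measure.

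To conclude, one invokes Sullivan's boundary rigidity theorem: a $\Gamma$-equivariant quasisymmetric homeomorphism of $S^1$ which is absolutely continuous with respect to Lebesgue measure must be the restriction of a Möbius transformation. This forces $\rho_1$ and $\rho_2$ to be conjugate in $\psl(2,\R)$, so $[\rho_1]=[\rho_2]$ in $\T(M)$, contradicting the assumption. The main obstacle is the rigidity step in the previous paragraph: in general $\dim(\nu_\rho)=1$ is strictly weaker than absolute continuity of $\nu_\rho$, and the symmetry hypothesis is used precisely to promote maximal dimension to absolute continuity via the equality case of the Guivarc'h-type fundamental inequality for symmetric walks.
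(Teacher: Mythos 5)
Your proposal is correct and follows essentially the same route as the paper: maximal dimension plus symmetry forces $\nu_\rho$ to lie in the Lebesgue class (the paper cites the equality case of the fundamental inequality from Blach\`ere--Ha\"issinsky--Mathieu, which is exactly the rigidity input you describe), the equivariant quasiconformal boundary map pushes one hitting measure to the other, and absolute continuity of that boundary map forces it to be M\"obius. No substantive differences to report.
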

\begin{proof}
	If \(\dim(\nu_\rho) = 1\) then, under the assumption that \(\mu\) is symmetric,  in fact \(\nu_\rho\) is absolutely continuous with respect to the visual measure (Lebesgue measure) on the boundary~\cite[Theorem 1.5]{bhm2011}.  Suppose \(\dim(\nu_{\rho_1}) = \dim(\nu_{\rho_2}) = 1\).
	
	There exists a quasi-conformal homeomorphism \(\varphi:\H \to \H\) such that \(\varphi(o) = o\) and \(\varphi\circ \rho_1(\gamma) = \rho_2(\gamma)\circ \varphi\) for all \(\gamma \in \Gamma\).   The quasi-conformal map \(\varphi\) extends continuously to the visual boundary in a unique way.  Denoting this extension by \(\varphi\) as well we have \(\varphi_*\nu_{\rho_1} = \nu_{\rho_2}\).
	
	This implies that the restriction of \(\varphi\) to the visual boundary is absolutely continuous.  However this can only happen if \([\rho_1] = [\rho_2]\) (see~\cite{a1985}).
\end{proof}

Another natural question to ask is the following one.

\begin{question}
 Is the function \(\ell:\T(M) \to (0,+\infty)\) (strictly) convex?
\end{question}

{\bf Acknowledgments:} This work is issued from an AIM Workshop in April 2022 on "Random walks beyond hyperbolic groups" organized by Joseph Maher, Yulan Qing and Giulio Tiozzo. We thank the organizers and the AIM for the opportunity to start this project: the working atmosphere of this workshop was extremely pleasant and productive.
We particularly thank Giulio Tiozzo for suggesting the question answered in this article. We thank all participants of this AIM workshop who discussed this question at some point, including Omer Angel, Adrian Carpenter, Vivian He, François Ledrappier, Joseph Maher, Catherine Pfaff and Giulio Tiozzo.

\bibliographystyle{alpha}
\bibliography{biblio}
\end{document}